\journalname{JOTA}
\def\eps{\varepsilon}
\definecolor{darkgreen}{rgb}{0,0.5,0}
\begin{document}

\title{Projected Gradient Methods with Momentum}


\authorrunning{Lapucci et al.}
\titlerunning{Projected gradient methods with momentum}

\author{Matteo Lapucci\textsuperscript{1}   \and Giampaolo Liuzzi\textsuperscript{2} \and Stefano Lucidi\textsuperscript{2} \and Marco Sciandrone\textsuperscript{2} \and Diego Scuppa\textsuperscript{2}}

\institute{
	\Letter ~ Diego Scuppa \at \email{diego.scuppa@uniroma1.it} \\
	\textsuperscript{1} Dipartimento di Ingegneria dell'Informazione, Universit\`a di Firenze, Via di Santa Marta 3, 50139 Firenze, Italy.
	\\
	\textsuperscript{2} Dipartimento di Ingegneria Informatica, Automatica e Gestionale, Sapienza Universit\`a di Roma, Via Ariosto 25, 00185 Roma, Italy.
	}



\date{Received: date / Accepted: date}

\maketitle

\begin{abstract}
We focus on the optimization problem with smooth, possibly nonconvex objectives and a convex constraint set for which the Euclidean projection operation is practically available. Focusing on this setting, we carry out a general convergence and complexity analysis for algorithmic frameworks. Consequently, we discuss theoretically sound strategies to integrate momentum information within classical projected gradient-type algorithms. One of these approaches is then developed in detail, up to the definition of a tailored algorithm with both theoretical guarantees and reasonable per-iteration cost. The proposed method is finally shown to outperform the standard (spectral) projected gradient method in two different experimental benchmarks, indicating that the addition of momentum terms is as beneficial in the constrained setting as it is in the unconstrained scenario.
\end{abstract}
\keywords{Nonlinear optimization \and Convex constraints \and Projected gradient method \and Momentum \and Complexity}
\subclass{90C29 \and  90C26 \and 90C60}

\section{Introduction }
In this work, we consider the constrained nonlinear optimization problem
\begin{equation}\label{prob_main}
\begin{aligned}
    \min\, & f(x)\quad 
  \text{ s.t. }&x\in {\cal S},
\end{aligned}
\end{equation}
where ${\cal S}\subset \mathbb{R}^n$ is a closed, convex set and $f:\mathbb{R}^n\to \mathbb{R}$ is a smooth, possibly nonconvex function. 

A rather classical approach (analyzed in the 60s already \cite{levitin1966constrained}) to deal with problems of this form is by means of iterative descent schemes relying on the Euclidean projection operator ${\cal P}_\mathcal{S}:\mathbb{R}^n\to {\cal S}$, provided the latter is available at a low computational cost for the feasible set $\mathcal{S}$.
In this scenario, two conceptually different types of \textit{projected gradient (PG) methods} are, in fact, known from the literature, both collapsing to the standard gradient descent method if $\mathcal{S}=\mathbb{R}^n$.
The first one \cite[Sec.\ 2.3]{bertsekas1999nonlinear} is defined by the following update rule
\begin{equation}\label{grad_proj1}
 x_{k+1}={\cal P}_\mathcal{S}[x_k-\alpha_k \nabla f(x_k)]
\end{equation}
where 
the stepsize $\alpha_k$ is determined by a curvilinear line search along the projection arc.
The second PG approach is, instead, defined by the following iteration  \cite[Sec.\ 20.4]{gripposciandrone2023introduction}
\begin{equation}\label{grad_proj2}
 x_{k+1}=x_k+\alpha_k \hat d_k,
\end{equation}
where $\hat{d}_k$ is the \textit{projected gradient direction}, i.e.,
\begin{equation}\label{dir1}
\hat d_k={\cal P}_\mathcal{S}[x_k-\eta_k\nabla f(x_k)]-x_k,
\end{equation}
$\eta_k>0$ is an algorithmic parameter,
and the stepsize $\alpha_k$ is determined by an Armijo-type line search.

Taking inspiration from the Barzilai-Borwein spectral gradient method \cite{barzilaiborwein1988twopoint,raydan1997barzilaiborwein}, the selection of parameter $\eta_k$ using the inverse Rayleigh quotient was proposed in \cite{birgin2000nonmonotone} to be used in combination with a nonmonotone line search for the selection of $\alpha_k$  \cite{grippo1986nonmonotone}. In the vast literature on this family of techniques, other relevant developments of PG methods deal with inexact projections \cite{birgin2003inexact,gonccalves2022inexact,golbabaee2018inexact,ferreira2022inexact} and the use of gradient scaling \cite{andreani2005spectral,bonettini2015new,bonettini2008scaled}.

Here, we are mainly interested in studying acceleration techniques for projected gradient algorithms using a {\it momentum}-based strategy, i.e.,
through the addition of a term that includes information from the past iteration within the update rule.
In the unconstrained case, i.e., when ${\cal S}=\mathbb{R}^n$,
gradient methods with momentum are defined according to iterations of the form
\begin{equation}\label{uncon_momentum}
x_{k+1}=x_k-\alpha_k\nabla f(x_k)+\beta_k(x_k-x_{k-1}),
\end{equation}
where $\alpha_k>0$ is the stepsize, and $\beta_k>0$ is the momentum weight. Relevant algorithms in this class are the famous Polyak's heavy-ball method \cite{polyak1964heavyball} and the broad family of conjugate gradient methods  \cite[Ch.\ 12]{gripposciandrone2023introduction}. While the latter is widely considered among the best performing options for nonlinear unconstrained problems, the former has particularly received renewed interest in the context of stochastic gradient algorithms for finite-sum problems in machine learning \cite{bottou2018optimization,wright2022optimization}.
In both cases, deterministic and stochastic, the momentum term helps to prevent oscillations and yields faster convergence.

Very recently, some gradient methods with momentum have been proposed \cite{lapucci2025qps,lee2022limited,tang2024manifold} sharing the innovative strategy of simultaneously determining $\alpha_k$  and $\beta_k$ through a two-dimensional search, instead of using predefined values chosen according to some rule. These methods share strong theoretical convergence guarantees and remarkable computational efficiency.
Despite the flourishing activity of the literature on momentum methods for unconstrained optimization, extensions to the constrained case have barely been studied.

The natural extension of the update rule \eqref{uncon_momentum} to the constrained case, in a projection-based fashion, is to modify \eqref{grad_proj1}
as follows
\begin{equation*}\label{con_momentum}
x_{k+1}={\cal P}_\mathcal{S}[x_k-\alpha_k\nabla f(x_k)+\beta_k(x_k-x_{k-1})].
\end{equation*}
In \cite{tao2020momentum}, the above scheme was, in fact, proposed for tackling problem \eqref{prob_main}, assuming that the objective function $f$ is possibly nonsmooth but convex, and that the feasible set ${\cal S}$ is bounded. For suitable, predefined sequences of $\alpha_k$ and $\beta_k$, 
the proposed method attains an optimal $\mathcal O(1/k)$ complexity bound.

In this work, we follow a completely different and new path for defining a projection-based extension to the constrained setting of momentum methods. In particular, we take inspiration from the line search{-}based projected gradient method \eqref{grad_proj2}, defining a feasible search direction
\begin{equation*}\label{sd}
d_k=\alpha_k\hat d_k+ \beta_k\left({\cal P}_\mathcal{S}[x_k+(x_k-x_{k-1})]-x_k\right){,}
\end{equation*}
where $\hat d_k$ comes from \eqref{dir1}; {if $\mathcal{S}=\mathbb{R}^n$, the update collapses back to \eqref{uncon_momentum}; here, the scalars $\alpha_k$ and $\beta_k$ need not be fixed in advance but are rather determined by solving -- in closed form -- a constrained two-dimensional quadratic problem}.  Then, the iteration of the proposed method takes the form
\begin{equation}
x_{k+1}=x_k+\mu_kd_k,
\end{equation}
where $\mu_k>0$ is set by an Armijo-type line search; the backtracking procedure can be started with the unit stepsize which, by the construction of $d_k$, is optimal according to some objective function model in a subspace and is thus a computationally convenient guess.
For the proposed class of algorithms, we prove asymptotic convergence results and a worst-case complexity bound of
$\mathcal O(\eps^{-2})$ in the nonconvex setting. In fact, these properties are proven for a more general class of constrained optimization algorithms and, to the best of our knowledge, are novel in the literature. 

The results of preliminary computational experiments highlight the benefits of adding the momentum term compared to using the standard projected gradient direction; in particular, we show through numerical experiments on both bound-constrained and ball-constrained problems that a significant speed-up can be attained for the state-of-the-art spectral projected gradient solver from \cite{birgin2000nonmonotone} if the proposed technique to introduce the momentum term is adopted.

The rest of the paper is organized as follows: we begin with the formalization of (approximate) stationarity conditions for problem \eqref{prob_main} in Section \ref{sec:stat}; we then identify a general line search{-}based algorithmic framework in Section \ref{sec:gen_alg}, for which we state and prove conditions guaranteeing global convergence and complexity bounds. The specific case of projected gradient methods with momentum is addressed in Section \ref{sec:pgdm}, where the soundness of direction choice is proved considering both $n$-dimensional and two-dimensional subproblems. In Section \ref{sec:alg}, we present in detail the proposed method, which is then numerically compared to the standard spectral projected gradient in Section \ref{sec:exp}. Final remarks are given in Section \ref{sec:conc}.

\section{Stationarity Conditions}
\label{sec:stat}
Before turning to the introduction of algorithmic frameworks to deal with problem \eqref{prob_main}, we need to state some basic assumptions and introduce the stationarity concepts we will be using to characterize solutions of the problem.

We consider problem \eqref{prob_main}, stating the following set of assumptions on the objective function $f$.
\begin{assumption}\label{assumption1}\,
\begin{itemize}
\item[-] $f$ is bounded below over ${\cal S}$, i.e., there exists $f^*\in\mathbb{R}$ such that $f(x)\ge f^*$ for all $x\in {\cal S}$;
\item [-] $f$ is $L$-smooth over an open convex set ${\cal D}$ containing ${\cal S}$, i.e.,
$$
\|\nabla f(x)-\nabla f(y)\|\le L\|x-y\|\quad\quad \forall x,y\in {\cal D}\supseteq \mathcal{S}.
$$
\end{itemize}
\end{assumption}

Now, the standard optimality condition for problem \eqref{prob_main} can be given according to the following definition.
\begin{definition}
    A point $\bar x\in \mathcal{S}$ is a stationary point for
problem \eqref{prob_main} if \begin{equation}
    \nabla f(\bar x)^T(x-\bar x)\ge 0
\quad\quad \forall x\in \mathcal{S}.
\end{equation}
\end{definition}

\noindent At this point, we shall also formally recall the Euclidean projector operator over ${\cal S}$, which will play a crucial role throughout the paper, defined as
$$
{\cal P}_\mathcal{S}[x]=\arg\min_{y\in {\cal S}}\|x-y\|,
$$
and the following well-known result that allows us to deal with stationarity in a practically more convenient way.
\begin{proposition}\label{proposition-app} 	Given $ x\in {\cal S}$ and $\eta >0$, let
$\hat {x}={\cal P}_\mathcal{S}[x-\eta\nabla f(x)]$ and $\hat d=\hat {x}-x.$
Then:
\begin{enumerate}[i)]
	\item the following inequality holds
	\begin{equation*} 
		\nabla f(x)^T\hat d\le -{{1}\over{\eta}}\|\hat {d}\|^2 = -{{1}\over{\eta}}\bigl\|{\cal P}_\mathcal{S}[x-\eta\nabla f(x)]-x\bigr\|^2;
	\end{equation*}
	\item point $x$ is stationary for \eqref{prob_main} if and only if
	$	
	\|\hat {d}\| = \bigl\|{\cal P}_\mathcal{S}[x-\eta\nabla f(x)]-x\bigr\|=0.
	$
\end{enumerate}
\end{proposition}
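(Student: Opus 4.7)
The plan is to rely on the variational characterization of the Euclidean projection onto a closed convex set: for any $z\in\mathbb{R}^n$ and $\hat z={\cal P}_\mathcal{S}[z]$, one has $(z-\hat z)^T(y-\hat z)\le 0$ for every $y\in\mathcal{S}$. Specializing to $z=x-\eta\nabla f(x)$ and $\hat z=\hat x=x+\hat d$, this inequality becomes
$$\bigl(x-\eta\nabla f(x)-\hat x\bigr)^T(y-\hat x)\le 0\qquad \forall\, y\in\mathcal{S}.$$
This single inequality will carry essentially the entire proof; the rest is algebraic manipulation and a choice of test points $y\in\mathcal{S}$.

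For part (i), I would plug in the feasible point $y=x\in\mathcal{S}$. Writing $x-\hat x=-\hat d$ and $x-\eta\nabla f(x)-\hat x=-\hat d-\eta\nabla f(x)$, the projection inequality collapses to $\|\hat d\|^2+\eta\,\nabla f(x)^T\hat d\le 0$, which after dividing by $\eta>0$ is exactly the claim. This is the routine calculation; no subtlety is involved beyond remembering the sign convention of the variational inequality.

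For part (ii), I would prove both implications separately. If $\|\hat d\|=0$, then $\hat x=x$ and the projection inequality with $\hat x$ replaced by $x$ reads $-\eta\,\nabla f(x)^T(y-x)\le 0$ for every $y\in\mathcal{S}$, which is precisely the stationarity condition. Conversely, if $x$ is stationary, then testing the stationarity inequality at $y=\hat x\in\mathcal{S}$ gives $\nabla f(x)^T\hat d\ge 0$; combining this with part (i) yields
$$0\le \nabla f(x)^T\hat d\le -\tfrac{1}{\eta}\|\hat d\|^2\le 0,$$
which forces $\|\hat d\|=0$.

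I do not expect any real obstacle here: the only ingredient beyond elementary algebra is the projection characterization for closed convex sets, which is standard and can be cited without proof. The only care needed is to be explicit about the two admissible test points ($y=x$ and $y=\hat x$) that activate the two different conclusions of the proposition.
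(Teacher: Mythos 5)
Your proof is correct and follows essentially the same route as the paper: part (i) is obtained from the standard variational characterization of the projection with the same choice of test point $y=x$. The only difference is that for part (ii) the paper simply cites an external result, whereas you supply the (correct) two-line argument explicitly, combining the projection inequality at $\hat x = x$ for one direction and part (i) with the test point $y=\hat x$ for the converse.
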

\begin{proof}
From \cite[Prop. 20.2(i)]{gripposciandrone2023introduction}, for every $y\in \mathbb R^n$ and $z \in \mathcal S$,
\begin{equation}
\label{eqn:prop_proj}
\big(y-\mathcal P_{\mathcal S}[y]\big)^T\big(z-\mathcal P_{\mathcal S}[y]\big)\le0.
\end{equation}
Then, for every $x\in \mathcal S$ and $\eta>0$, taking $y=x-\eta\nabla f(x)$ and $z=x$ in \eqref{eqn:prop_proj},  
\[
\big((x-\eta\nabla f(x))-\mathcal P_{\mathcal S}[x-\eta\nabla f(x)]\big)^T\big(x-\mathcal P_{\mathcal S}[x-\eta\nabla f(x)]\big)\le0.
\]
Then, point i) follows. Point ii) immediately follows from \cite[Prop. 20.3]{gripposciandrone2023introduction}.
\end{proof}

The main takeaway from point ii) of the above proposition is that we are allowed to select any arbitrary value of $\eta>0$ to define the (continuous) function $\phi_\eta:{\cal S}\to \mathbb{R}_{\ge 0}$ as $$\phi_\eta(x) = \bigl\|{\cal P}_\mathcal{S}[x-\eta\nabla f(x)]-x\bigr\|$$ and use $\phi_\eta(\bar x)=0$ as a stationarity condition for a point $\bar{x}\in\cal S$. 
Given $x_k\in {\cal S}$, we are also allowed to call $x_k$ $\eps$-stationary for problem \eqref{prob_main} if
\begin{equation*}\label{eps-staz}
\phi_\eta(x_k)=\bigl\|{\cal P}_\mathcal{S}[x_k-\eta\nabla f(x_k)]-x_k\bigr\|\le \eps.
\end{equation*}
However, the $\eps$-stationarity property is not independent of the choice of $\eta$, as we can deduce from the following lemma.

\begin{lemma}
\label{lemma:proj}
Let $x\in \mathcal S$, $z \in \mathbb R^n$. Define functions $p,h:\mathbb{R}_{>0}\to\mathbb{R}_{\ge 0}$
as
\[p(t)=\|\mathcal P_{\mathcal S}[x+tz]-x\|
\quad\quad\text{ and }\quad\quad
h(t)=\frac{1}{t}\|\mathcal P_{\mathcal S}[x+tz]-x\| = \frac{p(t)}{t}.
\]
Then, (a) $p(t)$ is monotonically nondecreasing and (b) $h(t)$ is monotonically nonincreasing.
\end{lemma}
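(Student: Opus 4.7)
The plan is to exploit the variational characterization of the Euclidean projection, \eqref{eqn:prop_proj}, applied twice with each projection used as the test point of the other. Fix $0<s<t$ and, for brevity, set $\pi(s)=\mathcal{P}_\mathcal{S}[x+sz]$, $\pi(t)=\mathcal{P}_\mathcal{S}[x+tz]$, together with $a=\pi(s)-x$ and $b=\pi(t)-x$, so that $p(s)=\|a\|$ and $p(t)=\|b\|$. Both $\pi(s)$ and $\pi(t)$ belong to $\mathcal{S}$ and are therefore admissible as test points in \eqref{eqn:prop_proj}.

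First I would instantiate \eqref{eqn:prop_proj} with $y=x+sz$ and test point $\pi(t)\in\mathcal{S}$, and then with $y=x+tz$ and test point $\pi(s)\in\mathcal{S}$; after rewriting through $a$ and $b$, these give the pair of scalar inequalities
\[
sz^T(b-a)\le a^T(b-a),\qquad tz^T(b-a)\ge b^T(b-a).
\]
The next step is to eliminate the $z^T(b-a)$ term: dividing the first by $s$ and the second by $t$ and chaining them produces
\[
s\|b\|^2+t\|a\|^2 \le (s+t)\,a^T b.
\]

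The pivotal step is now an application of Cauchy--Schwarz, $a^Tb\le\|a\|\|b\|$: the resulting inequality $s\|b\|^2+t\|a\|^2\le (s+t)\|a\|\|b\|$ factors as
\[
\bigl(\|b\|-\|a\|\bigr)\bigl(s\|b\|-t\|a\|\bigr)\le 0,
\]
that is, $\bigl(p(t)-p(s)\bigr)\bigl(s\,p(t)-t\,p(s)\bigr)\le 0$. A short case analysis then yields both conclusions at once: the alternative $p(t)<p(s)$ would force $s\,p(t)\ge t\,p(s)$ and hence, when $p(s)>0$, $p(t)/p(s)\ge t/s>1$, a contradiction; the degenerate case $p(s)=0$ immediately forces $p(t)=0$. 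Therefore $p(t)\ge p(s)$, which gives (a), and the companion factor must satisfy $s\,p(t)\le t\,p(s)$, i.e.\ $h(t)\le h(s)$, which is (b).

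The main obstacle, I expect, is spotting the right algebraic combination: it is the factorization triggered by Cauchy--Schwarz that is pivotal, and without it the monotonicity claims (a) and (b) would remain entangled. Some care with signs is also required when rearranging the two variational inequalities, because $a$ and $b$ swap roles between them; otherwise the chain of manipulations is routine.
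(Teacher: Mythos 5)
Your proof is correct, and it takes a genuinely different route from the paper's. Both arguments start from the same two instances of the variational inequality \eqref{eqn:prop_proj} (each projection tested against the other), but you combine them differently: the paper \emph{sums} them to extract the auxiliary fact $z^T\big(\mathcal P_{\mathcal S}[x+t_2z]-\mathcal P_{\mathcal S}[x+t_1z]\big)\ge 0$ and then proves (a) by expanding $\|\mathcal P_{\mathcal S}[x+t_2z]-x\|^2$ around $\mathcal P_{\mathcal S}[x+t_1z]-x$, while part (b) is not proved at all but delegated to a citation of Bertsekas. You instead divide the two inequalities by $s$ and $t$, chain them to get $s\|b\|^2+t\|a\|^2\le(s+t)a^Tb$, and apply Cauchy--Schwarz to obtain the factored inequality $\big(p(t)-p(s)\big)\big(s\,p(t)-t\,p(s)\big)\le 0$, from which (a) and (b) fall out together. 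This buys a fully self-contained and unified proof of both monotonicity claims at the cost of one extra trick (the Cauchy--Schwarz factorization); the paper's version of (a) is slightly more pedestrian but avoids any case analysis. One small point to tidy: when $p(t)=p(s)>0$ the product vanishes and tells you nothing about the second factor, so to conclude $s\,p(t)\le t\,p(s)$ in that subcase you should note directly that $s\,p(t)-t\,p(s)=(s-t)p(s)\le 0$ since $s<t$; together with your treatment of $p(s)=0$ (which correctly forces $p(t)=0$ via $s\|b\|^2\le 0$) this closes every case.
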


\begin{proof}
For part (b): see \cite[Lemma 2.3.1]{bertsekas1999nonlinear}. For part (a), let $0<t_1<t_2$. We aim to show that $p(t_1)\le p(t_2)$, i.e., $\|\mathcal P_{\mathcal S}[x+t_1z]-x\|^2 \le \|\mathcal  P_{\mathcal S}[x+t_2z]-x\|^2$. First, we prove that
\begin{equation}
\label{eqn:lemmaproj1}
z^T(\mathcal P_{\mathcal S}[x+t_2z]-\mathcal P_{\mathcal S}[x+t_1z])\ge 0.
\end{equation}
Indeed, by \cite[Prop. 20.2]{gripposciandrone2023introduction}, we know that:
\begin{equation}
\label{eqn:lemmaproj2}
\big((x+t_1z)-\mathcal P_{\mathcal S}[x+t_1z]\big)^T\big(\mathcal P_{\mathcal S}[x+t_2z]-\mathcal P_{\mathcal S}[x+t_1z]\big)\le 0
\end{equation}
and
\begin{equation}
\label{eqn:lemmaproj3}
\big(\mathcal P_{\mathcal S}[x+t_2z]-(x+t_2z)\big)^T\big(\mathcal P_{\mathcal S}[x+t_2z]-\mathcal P_{\mathcal S}[x+t_1z]\big)\le 0.
\end{equation}
Summing up \eqref{eqn:lemmaproj2} and \eqref{eqn:lemmaproj3}, we obtain:
\begin{align*}
    0 & \ge \big(\mathcal P_{\mathcal S}[x+t_2z]- \mathcal P_{\mathcal S}[x+t_1z] + (t_1-t_2)z\big)^T\big(\mathcal P_{\mathcal S}[x+t_2z]-\mathcal P_{\mathcal S}[x+t_1z]\big) \\
    & = \|\mathcal P_{\mathcal S}[x+t_2z]-\mathcal P_{\mathcal S}[x+t_1z]\|^2 + (t_1-t_2)z^T\big(\mathcal P_{\mathcal S}[x+t_2z]-\mathcal P_{\mathcal S}[x+t_1z]\big)\\
    & \ge (t_1-t_2)z^T\big(\mathcal P_{\mathcal S}[x+t_2z]-\mathcal P_{\mathcal S}[x+t_1z]\big).
\end{align*}
Since $t_1-t_2<0$, \eqref{eqn:lemmaproj1} must be satisfied. Hence,
\begin{align*}
\big\|\mathcal  P_{\mathcal S}&[x+t_2z]-x\big\|^2 \\&= \big\|\big(\mathcal  P_{\mathcal S}[x+t_1z]-x\big)+\big(\mathcal  P_{\mathcal S}[x+t_2z]-\mathcal  P_{\mathcal S}[x+t_1z]\big)\big\|^2 \\
&\ge \big\|\mathcal  P_{\mathcal S}[x+t_1z]-x\big\|^2 + 2\big(\mathcal  P_{\mathcal S}[x+t_1z]-x\big)^T\big(\mathcal  P_{\mathcal S}[x+t_2z]-\mathcal  P_{\mathcal S}[x+t_1z]\big)  \\
&= \big\|\mathcal  P_{\mathcal S}[x+t_1z]-x\big\|^2 + 2t_1 z^T\big(\mathcal  P_{\mathcal S}[x+t_2z]-\mathcal  P_{\mathcal S}[x+t_1z]\big)  \\
&\qquad - 2\big((x+t_1 z) - \mathcal  P_{\mathcal S}[x+t_1z]\big)^T\big(\mathcal  P_{\mathcal S}[x+t_2z]-\mathcal  P_{\mathcal S}[x+t_1z]\big) \\
&\ge\big\|\mathcal  P_{\mathcal S}[x+t_1z]-x\big\|^2,
\end{align*}
where the last inequality follows from \eqref{eqn:lemmaproj1} and \eqref{eqn:lemmaproj2}. The lemma is proved.
\end{proof}

From point (a) of Lemma \ref{lemma:proj}, we realize that given $x_k$, the value of $\phi_\eta(x_k)$ is nondecreasing as the value of $\eta$ increases; in other words, by changing the values of $\eta$, we can retrieve a family of stationarity measures that become milder as the value of $\eta$ gets closer to 0. As a consequence, if a point $x_k$ satisfies the $\eps$-stationarity condition given some value of $\bar \eta>0$, then it will also be $\eps$-stationary for any choice of $\phi_\eta$ with $\eta \in (0,\bar{\eta}]$.

Interestingly, we would obtain an analogous family of stationarity measures, this time with increasing strictness as $\eta$ gets smaller, if we considered the function $ \frac{1}{\eta} \bigl\|{\cal P}_\mathcal{S}[x_k-\eta\nabla f(x_k)]-x_k\bigr\|$ instead of $\phi_\eta$. However, in the following, we will only be referring to function $\phi_\eta$ to treat stationarity.

\section{A general line search-based algorithmic framework for constrained nonconvex optimization and its complexity bounds}
\label{sec:gen_alg}
In this section, we define a general line search{-}based algorithmic scheme for constrained optimization, and we state general complexity and convergence results that, to the best of our knowledge, are novel in the literature. These results will be later employed for the specific theoretical analysis
of the projected gradient algorithms with momentum proposed in this work.


We begin by stating a second set of assumptions, this time concerning the search directions to be employed in iterative line search{-}based methods.
\begin{assumption}\label{ass_dir}
Let $\{x_k\}\subseteq \mathcal{S}$ and $\{d_k\}\subseteq \mathbb{R}^n$ be the sequences of points and corresponding search directions produced by an iterative algorithm and let $\eta>0$. Then, there exist constants $a,c_1,c_2>0$ such that, for all $k$, the following conditions hold:
\begin{gather}\label{tesi1a}
    x_k+\alpha d_k\in {\cal S}\quad \quad \forall \alpha\in [0,a],\\ \label{tesi1b}
    \nabla f(x_k)^Td_k\le -c_1 \|d_k\|^2, \\ \label{tesi1c}
    \nabla f(x_k)^Td_k\le -c_2 \bigl\|{\cal P}_\mathcal{S}[x_k-\eta\nabla f(x_k)]-x_k\bigr\|^2=-c_2\phi_\eta(x_k)^2.
\end{gather}
\end{assumption}

The two latter conditions in Assumption \ref{ass_dir} somehow extend the classical gradient-related condition in unconstrained optimization \cite{cartis2022evaluation} to the constrained setting; the former deals instead with the feasibility of directions; in particular, it is just slightly stricter than $d_k$ being feasible: by the convexity of $\mathcal{S}$, the requirement in fact simply collapses to $x_k+ad_k\in\mathcal{S}$ for all $k$. 
\begin{remark}
\label{remark:pg_dir}
For any $\eta>0$,  Assumption \ref{ass_dir} always holds  with
$a=1$ and
$c_1=c_2=1/\eta$ if $d_k$ is always chosen to be equal to $\hat d_k=\mathcal{P}_\mathcal{S}[x_k-\eta \nabla f(x_k)]-x_k = \hat x_k - x_k$. The first condition, in fact, follows from $x_k+\hat{d}_k = \hat{x}_k\in \mathcal{S}$ by the definition of the projection operator. The two latter conditions instead collapse to the same inequality, which holds for $c_1=c_2=\frac{1}{\eta}$ by Proposition \ref{proposition-app}. As a consequence of this observation, the following complexity results will be applicable to the classical projected gradient method.
\end{remark}


We are now able to state a useful preliminary result.
\begin{proposition}\label{proposition2}
 Let $x_k\in {\cal S}$ and let $d_k\in \mathbb{R}^n$ be a pair of point-direction produced under Assumption \ref{ass_dir}.
	Then, for any $\gamma\in (0,1)$,   we have that
    \begin{equation*}
        f(x_{k}+\alpha d_k)\le f(x_k) +\alpha\gamma \nabla f(x_k)^Td_k
    \end{equation*}
	for all $\alpha$ such that
	$
		0< \alpha\le\alpha_L=\min\left[a, \frac{
			(1-\gamma)2c_1}{L }\right].$
\end{proposition}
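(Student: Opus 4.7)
The plan is to apply the standard descent lemma for $L$-smooth functions, then use the gradient-related condition \eqref{tesi1b} to absorb the quadratic remainder term into the linear part, and finally identify the range of $\alpha$ for which the desired Armijo inequality holds.

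First I would invoke Assumption \ref{assumption1} together with condition \eqref{tesi1a}: for $\alpha \in [0,a]$ the point $x_k+\alpha d_k$ lies in $\mathcal{S}\subseteq \mathcal{D}$, so the $L$-smoothness of $f$ on $\mathcal{D}$ together with the convexity of $\mathcal{D}$ (which makes the segment $[x_k, x_k+\alpha d_k]$ contained in $\mathcal{D}$) yields the usual quadratic upper bound
\begin{equation*}
f(x_k+\alpha d_k)\le f(x_k)+\alpha \nabla f(x_k)^T d_k + \frac{L\alpha^2}{2}\|d_k\|^2,\qquad \alpha \in [0,a].
\end{equation*}

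Then, subtracting $f(x_k)+\alpha\gamma\nabla f(x_k)^T d_k$ from both sides, the sufficient decrease target is equivalent to
\begin{equation*}
\alpha(1-\gamma)\nabla f(x_k)^T d_k + \frac{L\alpha^2}{2}\|d_k\|^2\le 0.
\end{equation*}
Using the gradient-related inequality \eqref{tesi1b}, i.e. $\nabla f(x_k)^T d_k\le -c_1\|d_k\|^2$, I can bound the left-hand side from above by
\begin{equation*}
\alpha\|d_k\|^2\left[\frac{L\alpha}{2}-(1-\gamma)c_1\right],
\end{equation*}
which is nonpositive precisely when $\alpha\le 2(1-\gamma)c_1/L$. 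Combining with the feasibility restriction $\alpha\le a$ gives the claimed threshold $\alpha_L$.

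There is no serious obstacle here; the only point that requires some care is justifying the use of the descent lemma, which needs the whole segment $[x_k, x_k+\alpha d_k]$ to lie in the open convex set $\mathcal{D}$ where $f$ is $L$-smooth. This is exactly the role of condition \eqref{tesi1a} combined with $\mathcal{S}\subseteq \mathcal{D}$ and the convexity of $\mathcal{D}$. Condition \eqref{tesi1c} is not needed for this proposition; it will play its role only in later complexity arguments where $\|d_k\|^2$ must be related to a stationarity measure.
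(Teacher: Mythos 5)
Your proposal is correct and follows essentially the same route as the paper's proof: the descent lemma combined with the gradient-related condition \eqref{tesi1b} to absorb the quadratic term, yielding the threshold $\alpha_L=\min\left[a,\tfrac{2(1-\gamma)c_1}{L}\right]$. The only cosmetic difference is that you rearrange the target inequality and factor out $\alpha\|d_k\|^2$, whereas the paper substitutes $\|d_k\|^2\le -\tfrac{1}{c_1}\nabla f(x_k)^Td_k$ directly into the chain of inequalities; both are valid applications of the same bound.
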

\begin{proof}
Let $\alpha\in (0,\alpha_L]$. By the Lipschitz-continuity of  $\nabla f$ and recalling the first condition of Assumption \ref{ass_dir}, noting $\alpha\le a$, we can write
$x_k+\alpha d_k\in {\cal S}.$
We can also employ the descent lemma and \eqref{tesi1b} from Assumption \ref{ass_dir} to write
\begin{align*}
    f(x_{k}+\alpha d_k)-f(x_k)&\leq  \alpha  \nabla f(x_k)^Td_k  + \alpha^2 \frac{L}{2} \|d_k\|^2 \\
	&= \alpha \gamma \nabla f(x_k)^Td_k  +(1-\gamma) \alpha  \nabla f(x_k)^Td_k  + \alpha^2 \frac{L}{2} \|d_k\|^2 \\
	&\le\alpha \gamma \nabla f(x_k)^Td_k  +(1-\gamma)\alpha \nabla f(x_k)^Td_k - \alpha^2 \frac{L}{2c_1} \nabla f(x_k)^Td_k \\
	&=\alpha \gamma \nabla f(x_k)^Td_k + \alpha \left((1-\gamma) - \alpha \frac{L}{2c_1} \right) \nabla f(x_k)^Td_k\\
	&\le\alpha \gamma \nabla f(x_k)^Td_k{,}
\end{align*}
where the last inequality follows noting that, by definition, $\alpha\le \frac{(1-\gamma)2c_1}{L}$.

\end{proof}

Next, we define the general constrained optimization algorithm based on the standard Armijo-type line search. The method is formalized by the pseudocode reported in Algorithm \ref{alg:qps}.
\begin{algorithm}[htbp]
	\caption{\texttt{Armijo line search-based algorithm}}
	\label{alg:qps}
	\algnewcommand{\LineComment}[1]{\State \texttt{\( \slash\ast \) #1 \( \ast\slash \)}}
	\begin{algorithmic}[1]
		\State Input: $x_0\in {\cal S}$, $a>0$, $\gamma\in(0,1)$, $\delta\in(0,1)$, $\eta>0$.
		\For{$k=0,1,\ldots$}
		\LineComment{Compute the search direction}
		\State Compute $d_k$ {satisfying} the conditions of Assumption \ref{ass_dir}
		\LineComment{Perform Armijo line search along $d_k$}

		%
		\State Set $\alpha = a$
		\While{$f(x_k+\alpha d_k) > f(x_k) + \gamma \alpha \nabla f(x_k)^Td_k$}
		\State Set $\alpha = \delta\alpha$
		\EndWhile
		\State Set $\alpha_k = \alpha$,
		%
		%
	    \State Set $x_{k+1} = x_k + \alpha_kd_k$
		\EndFor
		%
	\end{algorithmic}
\end{algorithm}

The next proposition describes the theoretical properties of the presented line search{-}based algorithm.

            \begin{proposition}
            \label{prop:complexity}
	Let $\eta>0$ 
    and let $\{x_k\}$, $\{d_k\}$ be the sequences produced by  Algorithm \ref{alg:qps}, with $\{d_k\}$ satisfying Assumption \ref{ass_dir}.
	For any $\eps>0$, we denote by:
	\begin{itemize}
		\item $k_{\eps}$: the first iteration where the point  $x_{k_\eps}$ satisfies $ \phi_\eta(x_k)\le \eps$;
		\item $ni_{\eps}$: the total number of iterations $k$ where the points  $x_k$ satisfy $ \phi_\eta(x_k) > \eps$.
	\end{itemize}
	Then, in the worst case, we have for Algorithm \ref{alg:qps}
    \begin{equation*}
        k_\eps\le \frac {(	f(x_0) - f^*)}{ \delta \alpha_L \gamma c_2}\, \eps^{-2}={\cal O}(\eps^{-2}) \quad \text{and} \quad ni_\eps\le \frac {(	f(x_0) - f^*)}{ \delta \alpha_L \gamma c_2}\, \eps^{-2}={\cal O}(\eps^{-2}),
    \end{equation*}
 where $f^*$ is the lower bound of $f$ over ${\cal S}$ and $\alpha_L=\min\left[a, \frac{
		(1-\gamma)2c_1}{L }\right]$.
\end{proposition}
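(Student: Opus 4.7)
The plan is to prove both bounds from a single per-iteration decrease estimate, obtained by combining the Armijo condition with the gradient-related property \eqref{tesi1c} of Assumption \ref{ass_dir}. The key preliminary step is to show that the backtracking procedure in Algorithm \ref{alg:qps} returns a stepsize bounded away from zero, specifically $\alpha_k \ge \delta \alpha_L$.

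To establish this lower bound on $\alpha_k$, I would distinguish two cases. If $a \le \alpha_L$, then Proposition \ref{proposition2} guarantees that the Armijo test is passed at the initial trial stepsize $\alpha = a$, so $\alpha_k = a \ge \delta \alpha_L$. Otherwise, the backtracking enters the while-loop; let $\alpha_k$ be the first accepted value. Then the previous trial $\alpha_k/\delta$ must have failed the Armijo test, which by Proposition \ref{proposition2} forces $\alpha_k/\delta > \alpha_L$, i.e., $\alpha_k > \delta \alpha_L$. Either way, $\alpha_k \ge \delta \alpha_L$ holds for every $k$.

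Next, I would exploit the Armijo rule together with \eqref{tesi1c} to get, for all $k$,
\begin{equation*}
f(x_{k+1}) \le f(x_k) + \gamma \alpha_k \nabla f(x_k)^T d_k \le f(x_k) - \gamma \delta \alpha_L c_2 \,\phi_\eta(x_k)^2.
\end{equation*}
In particular, whenever $\phi_\eta(x_k) > \epsilon$, this yields the uniform decrease
\begin{equation*}
f(x_k) - f(x_{k+1}) \ge \gamma \delta \alpha_L c_2 \, \epsilon^{2}.
\end{equation*}

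Finally, I would telescope the decrease over the relevant sets of iterations and use the lower bound $f \ge f^*$ on $\mathcal{S}$. For the bound on $ni_\epsilon$, summing over all iterations with $\phi_\eta(x_k) > \epsilon$ gives $ni_\epsilon \,\gamma \delta \alpha_L c_2 \epsilon^2 \le f(x_0) - f^*$. For the bound on $k_\epsilon$, observing that by definition $\phi_\eta(x_k) > \epsilon$ for all $k = 0,1,\dots,k_\epsilon - 1$, telescoping the per-iteration decrease on these indices gives $k_\epsilon \,\gamma \delta \alpha_L c_2 \epsilon^2 \le f(x_0) - f(x_{k_\epsilon}) \le f(x_0) - f^*$. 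Rearranging yields both claimed $\mathcal{O}(\epsilon^{-2})$ bounds. The only step requiring any care is the case analysis for the backtracking lower bound on $\alpha_k$; the rest is a routine sum-of-decreases argument.
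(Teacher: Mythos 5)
Your proposal is correct and follows essentially the same route as the paper: a lower bound $\alpha_k\ge\delta\alpha_L$ on the accepted stepsize via Proposition \ref{proposition2}, the per-iteration decrease $f(x_k)-f(x_{k+1})\ge\gamma\delta\alpha_L c_2\,\phi_\eta(x_k)^2$ from the Armijo condition and \eqref{tesi1c}, and telescoping over the relevant index sets. Your explicit two-case analysis of the backtracking (first trial accepted versus last rejected trial exceeding $\alpha_L$) is in fact a slightly more careful rendering of the stepsize bound than the paper's one-line justification.
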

\begin{proof}
The instructions of the Armijo-type line search and Proposition \ref{proposition2} imply that, for every iteration $k$, the stepsize $\alpha_k$ satisfies $\alpha_k\ge \delta\alpha_L$.

From the acceptance condition of the Armijo-type line search and the third condition of Assumption \ref{ass_dir}, for all $k\ge 0$, we then get
\begin{equation}\label{covergenza-gradrel-0}
\begin{aligned}
    f(x_k) - f(x_{k+1})&\ge \alpha_k \gamma |\nabla f(x_k)^T d_k|\\&\ge \delta \alpha_L \gamma |\nabla f(x_k)^T d_k|\ge \delta \alpha_L \gamma c_2\phi_\eta(x_k)^2.
\end{aligned}
\end{equation}
Recalling the definition of $k_\eps$, for $k=0,\ldots,k_\eps-1$, we have $\phi_\eta(x_k)>\varepsilon$ and thus
$
f(x_k) - f(x_{k+1})\ge \delta\alpha_L   \gamma c_2\eps^2.
$
By summing the inequalities, for  $k=0,\ldots,k_\eps-1$, we get
$
f(x_0) - f(x_{k_\eps})\ge k_\eps \delta\alpha_L  \gamma c_2 \eps^2,
$
from which we can write
\begin{equation*}\label{covergenza-gradrel-1}
	f(x_0) - f^*\ge f(x_0) - f(x_{k_\eps})\ge k_\eps \delta\alpha_L   \gamma c_2\eps^2;
\end{equation*}
it then follows
\begin{equation*}\label{covergenza-gradrel-2}
	k_\eps\le \frac {(	f(x_0) - f^*)}{\delta\alpha_L  \gamma c_2 }\, \eps^{-2}.
\end{equation*}
	Moreover, by adding together all the relations (\ref{covergenza-gradrel-0})
        from the first iteration up to the $k$-th one, we have
        \begin{align*}
            f(x_0) - f^*&\ge		f(x_0) - f(x_{k+1})\ge \delta\alpha_L   \gamma c_2 \sum_{i=0}^k \phi_\eta(x_i)^2.
        \end{align*}
        Taking the limit for $k\to\infty$,  we have
        $
            f(x_0) - f^*\ge \delta\alpha_L   \gamma c_2 \sum_{i=0}^\infty \phi_\eta(x_i)^2%
        $
so that we get
        \begin{align*}
            f(x_0) - f^*&\ge \delta\alpha_L   \gamma c_2 \sum_{i=0}^\infty \phi_\eta(x_i)^2
			\ge	\delta\alpha_L   \gamma c_2\!\! \sum_{\phi_\eta(x_i)\ge \eps}\!\!\phi_\eta(x_i)^2  \ge \delta\alpha_L   \gamma c_2\, ni_\eps\, \eps^2,
        \end{align*}
		which finally implies:
		\begin{equation*}\label{covergenza-gradrel-4}
			ni_\eps\le \frac {(	f(x_0) - f^*)}{\delta\alpha_L   \gamma c_2}\, \eps^{-2}.
		\end{equation*}
			\end{proof}

\begin{remark}
The bounds on $k_\varepsilon$ and $ni_\varepsilon$ stated in Proposition  \ref{prop:complexity} are equal. This result makes sense: in the worst case, all $ni_\eps$ iterations with non-$\varepsilon$-stationary iterates will have to be performed before obtaining $x_{k_\varepsilon}.$ 
\end{remark}
\begin{remark}
 If we assume that the construction of $d_k$ requires a fixed number of (additional) gradient evaluations and a fixed -- possibly zero -- number of projections, then we can immediately derive from the result of Proposition \ref{prop:complexity} an $\mathcal{O}(\eps^{-2})$ {bound} for the number of gradient evaluations and projections required for obtaining $x_{k_\eps}$.

 By standard reasoning similar to that from \cite[Lemma 2.2.1]{cartis2022evaluation}, we can also note that the number of function evaluations $nf_{\alpha_k}$ needed to satisfy the Armijo criterion at iteration $k$ is bounded according to 
	\begin{equation*}
		nf_{\alpha_k}\le  \biggl\lceil\log_\delta\left(\frac{\alpha_L}{a}\right)\biggr \rceil+1.
	\end{equation*}
    The bound is constant w.r.t.\ $k$. If we then assume that the number of function evaluations required to define $d_k$ is also fixed, we can finally derive an $\mathcal{O}(\eps^{-2})$ bound for the number of function evaluations required to reach $x_{k_\eps}$.
\end{remark}

The result from Proposition \ref{prop:complexity} also allows us to immediately derive a general global asymptotic convergence result.
\begin{corollary}
\label{corollary:globconv}
    Let $\{x_k\}$, $\{d_k\}$ be the sequences produced by Algorithm \ref{alg:qps}, and assume $\mathcal S$ is a compact set. Then $\{x_k\}$ admits accumulation points, each one being stationary for problem \eqref{prob_main}.
\end{corollary}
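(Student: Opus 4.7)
The plan is to combine three ingredients already established in the excerpt: (i) the iterates remain in $\mathcal S$, so compactness gives accumulation points; (ii) the proof of Proposition \ref{prop:complexity} actually yields that the series $\sum_{k} \phi_\eta(x_k)^2$ converges, hence $\phi_\eta(x_k)\to 0$; (iii) the function $\phi_\eta$ is continuous and its zeros characterize stationarity by Proposition \ref{proposition-app}(ii).

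First, I would argue that $\{x_k\}\subseteq \mathcal S$. This is an easy induction: $x_0\in\mathcal S$ by the input specification of Algorithm \ref{alg:qps}, and at each iteration the Armijo backtracking produces $\alpha_k\in(0,a]$, so by condition \eqref{tesi1a} of Assumption \ref{ass_dir} we have $x_{k+1}=x_k+\alpha_k d_k\in\mathcal S$. Since $\mathcal S$ is compact, the Bolzano--Weierstrass theorem guarantees that $\{x_k\}$ admits at least one accumulation point $\bar x\in\mathcal S$.

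Next, I would extract the summability estimate already contained in the proof of Proposition \ref{prop:complexity}. Summing the per-iteration decrease \eqref{covergenza-gradrel-0} from $0$ to $k$ and using the lower bound $f^*\le f(x_{k+1})$ gives
\[
f(x_0)-f^*\;\ge\;\delta\alpha_L\gamma c_2\sum_{i=0}^{k}\phi_\eta(x_i)^2
\]
for every $k$. Letting $k\to\infty$ shows $\sum_{i=0}^{\infty}\phi_\eta(x_i)^2<+\infty$, so in particular $\phi_\eta(x_k)\to 0$.

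Finally, I would conclude by continuity. The remark following Proposition \ref{proposition-app} explicitly states that $\phi_\eta:\mathcal S\to\mathbb R_{\ge 0}$ is continuous (continuity of $\nabla f$ and nonexpansiveness of $\mathcal P_{\mathcal S}$). If $\{x_{k_j}\}$ is any subsequence converging to an accumulation point $\bar x$, then $\phi_\eta(\bar x)=\lim_j\phi_\eta(x_{k_j})=0$. By Proposition \ref{proposition-app}(ii), this means $\bar x$ is stationary for problem \eqref{prob_main}, completing the proof.

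There is no real obstacle here: the corollary is essentially a repackaging of the complexity bound as an asymptotic statement, and the only two things one must be careful about are verifying that the iterates stay feasible (which is immediate from Assumption \ref{ass_dir} and the line search) and invoking continuity of $\phi_\eta$ to transfer the limit from $\{x_{k_j}\}$ to $\bar x$.
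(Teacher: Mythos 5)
Your proposal is correct and follows essentially the same route as the paper's own proof: feasibility plus compactness for existence of accumulation points, the summability of $\sum_k \phi_\eta(x_k)^2$ extracted from the proof of Proposition \ref{prop:complexity} to get $\phi_\eta(x_k)\to 0$, and continuity of $\phi_\eta$ together with Proposition \ref{proposition-app}(ii) to conclude stationarity. The only (harmless) addition is your explicit induction for $\{x_k\}\subseteq\mathcal S$, which the paper dispatches by simply citing the instructions of the algorithm.
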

\begin{proof}
    By the instructions of the algorithm, $\{x_k\}\subseteq \mathcal S$; by the compactness of $\mathcal S$, we immediately conclude that accumulation points exist for $\{x_k\}$. Now, let $\bar{x}$ be any such accumulation point, i.e., there exists $K\subseteq\{0,1,\ldots\}$ such that $x_k\to\bar{x}$ for $k\in K$, $k\to\infty$. Recalling from the proof of Proposition \ref{prop:complexity} that
    $$\sum_{k=0}^\infty\phi_\eta(x_k)^2\le \frac{f(x_0{)}-f^*}{\delta\alpha_L\gamma c_2}<+\infty,$$
    we have that $\lim_{k\to \infty}\phi_\eta(x_k) = 0.$
    By the continuity of $\phi_\eta$, we then get $0=\lim_{k\in K,\,k\to\infty}\phi_\eta(x_k) = \phi_\eta(\bar{x}),$
    which concludes the proof.
\end{proof}

\section{Projected gradient directions with momentum}
\label{sec:pgdm}
To define a projected gradient method with momentum, we need to feed Algorithm \ref{alg:qps} with directions of the form
\begin{equation}\label{grad_mom}
d_k=\alpha_k\hat d_k+\beta_k\hat s_k,
\end{equation}
where $\alpha_k,\beta_k\ge 0$ and, for $k\ge 1$,
\begin{equation}
    \label{eq:def_d_s}
    \hat d_k={\cal P}_\mathcal{S}[x_k-\eta_k\nabla f(x_k)]-x_k,\quad\quad
\hat s_k={\cal P}_\mathcal{S}[x_k+(x_k-x_{k-1})]-x_k.
\end{equation}
In every iteration, the parameter $\eta_k$ is allowed to vary as long as it satisfies \begin{equation}\label{etamax}
\eta_{\min} \le \eta_k \le \eta_{\max},\end{equation} where $0<\eta_{\min}\le\eta_{\max}<+\infty$ are two fixed scalars. To determine $\alpha_k$ and $\beta_k$, we can solve a {two-}dimensional quadratic programming problem defined according to two possible strategies, inspired by \cite{lapucci2025qps}: the first involves a $n\times n$ matrix $B_k$, whereas the
second involves a $2\times 2$ matrix $H_k$.

We can thus consider a quadratic subproblem  with the additional constraint on direction structure:
\begin{equation}\label{pqc2}
\begin{aligned}
    \min_{d,\alpha,\beta} \ & \nabla f(x_k)^Td+{{1}\over{2}}d^TB_kd\\ \text{s.t. }  &x_k+d\in {\cal S},\quad d=\alpha \hat d_k+\beta \hat s_k.
\end{aligned}
\end{equation}
Now, the above problem may be too complex to deal with if the goal is to conveniently exploit it within an outer algorithmic framework. We thus aim to define a {two-}dimensional subproblem whose feasible set is contained
in that of \eqref{pqc2}, thus possibly excluding optimal solutions, but whose solution can be found in closed form for any convex set $\cal S$.
First, we need to state the following result.
\begin{proposition}\label{feasibility}
For every pair of values
$\alpha,\beta$ satisfying
$
\alpha+\beta \le 1$ and $ \alpha\ge 0, \beta \ge 0,
$
the direction
$
d=\alpha\hat d_k+\beta \hat s_k
$
is such that
$
x_k+d\in {\cal S}.
$
\end{proposition}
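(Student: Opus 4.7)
The plan is to rewrite $x_k+d$ as a convex combination of three feasible points and then invoke the convexity of $\mathcal{S}$. First, I would observe that, by the very definition of the projection operator in \eqref{eq:def_d_s}, the points
\[
x_k+\hat d_k = \mathcal{P}_\mathcal{S}[x_k-\eta_k\nabla f(x_k)]\quad\text{and}\quad x_k+\hat s_k = \mathcal{P}_\mathcal{S}[x_k+(x_k-x_{k-1})]
\]
both lie in $\mathcal{S}$, and of course $x_k\in\mathcal{S}$ as well.

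Next, I would perform the straightforward algebraic rearrangement
\[
x_k+d = x_k+\alpha\hat d_k+\beta\hat s_k = (1-\alpha-\beta)\,x_k + \alpha\,(x_k+\hat d_k) + \beta\,(x_k+\hat s_k).
\]
Under the hypothesis $\alpha,\beta\ge 0$ and $\alpha+\beta\le 1$, the three coefficients $1-\alpha-\beta$, $\alpha$, $\beta$ are all nonnegative and sum to $1$, so the right-hand side is a convex combination of three points of $\mathcal{S}$. By convexity of $\mathcal{S}$, this yields $x_k+d\in\mathcal{S}$, which is the desired conclusion.

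There is essentially no obstacle here; the only subtlety is that $\hat s_k$ is only defined for $k\ge 1$ (since it uses $x_{k-1}$), but this is already implicit in the setup that introduces the momentum direction. The argument works for any convex closed set $\mathcal{S}$ and does not require any additional assumption on $\eta_k$ beyond positivity.
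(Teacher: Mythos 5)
Your proof is correct and coincides with the paper's own argument: the same decomposition of $x_k+d$ as a convex combination of $x_k$, $x_k+\hat d_k$, and $x_k+\hat s_k$, followed by the convexity of $\mathcal{S}$. No gaps.
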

\begin{proof}
Let $(\alpha$, $\beta)$ be scalars satisfying the assumptions of the proposition.
The thesis straightforwardly holds for both $(\alpha,\beta)=(1,0)$ and $(\alpha,\beta)=(0,1)$ by the definition of Euclidean projection.
Let us now consider a generic choice of values $\alpha,\beta>0$.
We can write $$x_k+\alpha \hat{d}_k+\beta \hat{s}_k = \alpha (x_k+\hat{d}_k)+\beta (x_k+\hat{s}_k) + x_k(1-\alpha-\beta) .$$
Let $c = 1-\alpha-\beta$. Since $x_k+\hat{d}_k\in \mathcal{S}$, $x_k+\hat{s}_k\in\mathcal{S}$, $x_k\in\mathcal{S}$, and $\alpha+\beta+c=1$, we have that $x_k+\alpha \hat{d}_k+\beta \hat{s}_k$ is a convex combination of points in $\mathcal{S}$ and thus it belongs to $\mathcal{S}$.
\end{proof}

\noindent On the basis of the above result,
we can then replace problem \eqref{pqc2} with the following {two-}dimensional problem
\begin{equation}
    \label{pqc3}
    \begin{aligned}
        \min_{d,\alpha,\beta} \ &\nabla f(x_k)^Td+{{1}\over{2}}d^TB_kd\\ 	\text{s.t. }& d=\alpha\hat d_k+\beta \hat s_k,\quad \alpha+\beta \le 1,\quad \alpha\ge 0,\; \beta \ge 0,
    \end{aligned}
\end{equation}
i.e., setting $
P_k=\begin{bmatrix}\hat d_k& \hat s_k\end{bmatrix}$ {and $g_k=\nabla f(x_k)$},
\begin{align*}
    \min_{\alpha,\beta} \ &\frac{1}{2}\ \begin{bmatrix}
			\alpha\\
			\beta
		\end{bmatrix}^TP_k^T B_kP_k\begin{bmatrix}
		\alpha\\
		\beta
	\end{bmatrix} +g_k^T  P_k\begin{bmatrix}
	\alpha\\
	\beta
\end{bmatrix}\quad 	\text{ s.t. }
	\quad \alpha+\beta \le 1,\quad \alpha\ge 0,\; \beta \ge 0.
\end{align*}

We could further consider replacing the $2\times 2$ matrix $P_k^T B_kP_k$ with a generic $2\times 2$ matrix $H_k$, thus obtaining  the problem
\begin{equation}
\label{prob2x2}
    \begin{aligned}
    \min_{\alpha,\beta} \ &\frac{1}{2}\ \begin{bmatrix}
			\alpha\\
			\beta
		\end{bmatrix}^TH_k\begin{bmatrix}
		\alpha\\
		\beta
	\end{bmatrix} +g_k^T  P_k\begin{bmatrix}
	\alpha\\
	\beta
\end{bmatrix}\quad	\text{s.t.}
	\quad \alpha+\beta \le 1,\quad\alpha\ge 0,\; \beta \ge 0.
\end{aligned}
\end{equation}
As shown in the Appendix, the solution of \eqref{prob2x2} can indeed be determined in closed form; thus, we can efficiently rely on this problem for the construction of $d_k$ in the algorithmic framework.

In the next two propositions, we state conditions on $B_k$ and $H_k$ respectively, ensuring that the obtained values $\alpha_k$ and $\beta_k$ are such that a sequence of directions $\{d_k\}$ produced according to \eqref{grad_mom} satisfies Assumption \ref{ass_dir}.

\begin{proposition}\label{proposition-d2}
	Let $\{B_k\}\subseteq \mathbb{R}^{n\times n}$ be a sequence of positive definite symmetric matrices, assuming that  $0<\nu_1\le\nu_2$ exist such that
	$
		\nu_1\le \lambda_{\min}(B_k)\le \lambda_{\max}(B_k)\le \nu_2
	$
for all $k$.
Let $\{\eta_k\}$ be a sequence of values such that, for all $k$,   condition \eqref{etamax} holds, assuming that $\eta_{\max}< \frac{2}{\nu_1}.$ 
\noindent Furthermore, let
    $
    d_k=\alpha_k\hat d_k+\beta_k \hat s_k
    $
    be the solution, for each $k$, of problem \eqref{pqc3} induced by matrix $B_k$ and vectors $\hat d_k, \hat{s}_k$ defined according to {\eqref{eq:def_d_s}}.
		Then, for any $\eta\in[\eta_{\min},\eta_{\max}]$, the sequence of directions $\{d_k\}$ satisfies Assumption \ref{ass_dir}.
\end{proposition}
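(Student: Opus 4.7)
My strategy is to verify the three requirements of Assumption \ref{ass_dir} individually, exploiting the optimality of $(\alpha_k,\beta_k)$ in \eqref{pqc3} through comparisons with carefully chosen feasible candidates. The first two conditions follow quickly; condition \eqref{tesi1c} is the substantive part.

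For \eqref{tesi1a}, since $(\alpha_k,\beta_k)$ is feasible for \eqref{pqc3}, Proposition \ref{feasibility} yields $x_k+d_k\in\mathcal{S}$, and convexity of $\mathcal{S}$ gives $x_k+\alpha d_k\in\mathcal{S}$ for all $\alpha\in[0,1]$, so one may take $a=1$. For \eqref{tesi1b}, plugging the feasible candidate $(0,0)$ into the objective of \eqref{pqc3} yields $\nabla f(x_k)^T d_k+\tfrac12 d_k^T B_k d_k\le 0$ by optimality, and the eigenvalue bound $d_k^T B_k d_k\ge \nu_1\|d_k\|^2$ produces the desired inequality with $c_1=\nu_1/2$.

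For \eqref{tesi1c}, I would compare with the family of feasible candidates $(t,0)$, $t\in[0,1]$, each giving direction $t\hat d_k$. Optimality of $(\alpha_k,\beta_k)$, together with nonnegativity of $\tfrac12 d_k^T B_k d_k$, yields
\begin{equation*}
\nabla f(x_k)^T d_k \le -t g_k + \tfrac{t^2}{2} q_k,
\end{equation*}
where $g_k:=-\nabla f(x_k)^T\hat d_k\ge \|\hat d_k\|^2/\eta_k$ by Proposition \ref{proposition-app}(i), and $\nu_1\|\hat d_k\|^2 \le q_k := \hat d_k^T B_k \hat d_k \le \nu_2\|\hat d_k\|^2$. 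Optimizing the right-hand side over $t\in[0,1]$ splits into two cases: when $g_k\le q_k$, the unconstrained minimizer $t=g_k/q_k$ lies in $[0,1]$, giving $\nabla f(x_k)^T d_k\le -g_k^2/(2q_k)\le -\|\hat d_k\|^2/(2\eta_{\max}^2\nu_2)$; when $g_k>q_k$, the minimizer on $[0,1]$ is $t=1$, giving $\nabla f(x_k)^T d_k\le -g_k + q_k/2 < -q_k/2 \le -(\nu_1/2)\|\hat d_k\|^2$. Hence $\nabla f(x_k)^T d_k\le -\bar c\,\|\hat d_k\|^2$ with $\bar c:=\min\{\nu_1/2,\,1/(2\eta_{\max}^2\nu_2)\}>0$. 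To convert this bound, which is expressed in terms of $\|\hat d_k\|=\phi_{\eta_k}(x_k)$, into one involving $\phi_\eta(x_k)$ for an arbitrary $\eta\in[\eta_{\min},\eta_{\max}]$, I would apply Lemma \ref{lemma:proj}: part (a) settles $\eta\le\eta_k$ directly, and part (b) gives $\phi_{\eta_k}(x_k)\ge(\eta_k/\eta)\phi_\eta(x_k)\ge(\eta_{\min}/\eta_{\max})\phi_\eta(x_k)$ when $\eta>\eta_k$. Combining the two cases, \eqref{tesi1c} holds with $c_2=\bar c\,(\eta_{\min}/\eta_{\max})^2$.

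The most delicate step is the last one: the two parts of Lemma \ref{lemma:proj} encode monotonicity in opposite directions, so the cases $\eta\le\eta_k$ and $\eta>\eta_k$ must be handled separately in order to extract a uniform factor depending only on $\eta_{\min}/\eta_{\max}$. The internal case split on $g_k$ versus $q_k$ in the minimization over $t$ also needs some care to produce a strictly positive, $k$-independent lower bound on $\bar c$ while correctly invoking both the upper eigenvalue bound $\nu_2$ and the lower bound $\nu_1$ at the right places.
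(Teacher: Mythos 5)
Your proof is correct, and conditions \eqref{tesi1a} and \eqref{tesi1b} are handled exactly as in the paper (feasibility via Proposition \ref{feasibility} with $a=1$; the $(0,0)$ comparison plus $\lambda_{\min}(B_k)\ge\nu_1$ giving $c_1=\nu_1/2$). For \eqref{tesi1c} the skeleton is also the same -- compare the optimal value of \eqref{pqc3} against feasible points of the form $(t,0)$, invoke Proposition \ref{proposition-app}(i), and transfer from $\phi_{\eta_k}$ to $\phi_\eta$ via Lemma \ref{lemma:proj} with the same factor $\eta_{\min}^2/\eta_{\max}^2$ -- but your key quantitative step differs in a meaningful way. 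The paper tests the single candidate $t=\nu_1/\nu_2$, which yields the constant $\frac{\nu_1}{\nu_2}\bigl(\frac{1}{\eta_{\max}}-\frac{\nu_1}{2}\bigr)$ and therefore \emph{requires} the hypothesis $\eta_{\max}<2/\nu_1$ to make that constant positive. You instead minimize $-t g_k+\frac{t^2}{2}q_k$ over all $t\in[0,1]$, and the resulting two-case bound $\bar c=\min\{\nu_1/2,\,1/(2\eta_{\max}^2\nu_2)\}$ is positive unconditionally, so the hypothesis $\eta_{\max}<2/\nu_1$ is never used. This is a genuine sharpening: it shows that the restriction on $\eta_{\max}$ (which the paper spends a paragraph motivating) is an artifact of the particular test point rather than intrinsic to the construction, at the modest price of a case distinction. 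Your handling of Lemma \ref{lemma:proj} (splitting on $\eta\le\eta_k$ versus $\eta>\eta_k$ rather than passing through $\eta_{\max}$ as an intermediate value, as the paper does) is an equivalent reorganization of the same monotonicity facts.
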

\begin{proof} 

Proposition \ref{feasibility} ensures that $d_k$ satisfies  condition (\ref{tesi1a}) of Assumption \ref{ass_dir}  with $a=1$.

Let  us now consider the vector
$
\hat d_k={\cal P}_\mathcal{S}[x_k-\eta_k\nabla f(x_k)]-x_k.
$
As already observed, $\hat d_k$ is a feasible point for problem \eqref{pqc3} obtained setting $\alpha=1$, $\beta=0$. 
Recalling that $d_k$ is optimal for the subproblem at iteration $k$  and noting that the direction 
$\tilde d_k=\frac{\nu_1}{\nu_2}\hat {d}_k$
is feasible, we can then write
\begin{align*}
    \nabla f(x_k)^Td_k&\le
	\nabla f(x_k)^Td_k
	+{{1}\over{2}} d_k^TB_k d_k
	\\&\le \nabla f(x_k)^T\tilde d_k+
	{{1}\over{2}}\tilde d_k^TB_k\tilde d_k\\ &\le -\frac{\nu_1}{\nu_2}\left({{1}\over{\eta_k}}\|\hat {d}_k\|^2-{{1}\over{2}}\lambda_{\max}(B_k)\frac{\nu_1}{\nu_2}\|\hat {d}_k\|^2\right)
	\\&\le -\frac{\nu_1}{\nu_2}\big({{1}\over{\eta_k}} -{{1}\over{2}}\nu_1 \big) \|\hat {d}_k\|^2\\
	&\le-\frac{\nu_1}{\nu_2}\big({{1}\over{\eta_{\max}}} -{{1}\over{2}}\nu_1 \big) \bigl\|{\cal P}_\mathcal{S}[x_k-  \eta_k\nabla f(x_k)]-x_k\bigr\|^2\\
     &=-\frac{\nu_1}{\nu_2}\eta_k^2\big({{1}\over{\eta_{\max}}} -{{1}\over{2}}\nu_1 \bigl)\frac{1}{\eta_k^2}\|{\cal P}_\mathcal{S}[x_k-  \eta_k\nabla f(x_k)]-x_k\bigr\|^2\\  
     &\le -\frac{\nu_1}{\nu_2}\eta_{\min}^2\big({{1}\over{\eta_{\max}} }-{{1}\over{2}}\nu_1  \bigl)\frac{1}{\eta_{\max}^2}\|{\cal P}_\mathcal{S}[x_k-  \eta_{\max}\nabla f(x_k)]-x_k\bigr\|^2\\
     &\le -\frac{\nu_1}{\nu_2}\eta_{\min}^2\big({{1}\over{\eta_{\max}} }-{{1}\over{2}}\nu_1  \bigl)\frac{1}{\eta_{\max}^2}\|{\cal P}_\mathcal{S}[x_k-  \eta\nabla f(x_k)]-x_k\bigr\|^2, 
\end{align*}
where the third inequality follows from Proposition \ref{proposition-app}, and the fifth one comes from $\eta_k \le \eta_{\max}$. For the sixth one, we used that $\eta_k \ge \eta_{\min}$, and we exploited the inequality \[\frac{1}{\eta_k}\|{\cal P}_\mathcal{S}[x_k-\eta_k\nabla f(x_k)]-x_k\| \ge \frac{1}{\eta_{\max}}\|{\cal P}_\mathcal{S}[x_k-\eta_{\max}\nabla f(x_k)]-x_k\|,\] which follows from $\eta_k \le \eta_{\max}$ and  $h(t) = \frac{1}{t}\|\mathcal{P}_\mathcal{S}[x-t\nabla f(x)]-x\|$ being {nonincreasing}, according to Lemma \ref{lemma:proj}.
The last one finally follows from function $p(t) = \|\mathcal{P}_\mathcal{S}[x-t\nabla f(x)]-x\|$ being nondecreasing by Lemma \ref{lemma:proj}, which implies
\[\|{\cal P}_\mathcal{S}[x_k-\eta_{\max}\nabla f(x_k)]-x_k\| \ge \|{\cal P}_\mathcal{S}[x_k-\eta\nabla f(x_k)]-x_k\|{,}\]
as $\eta\le \eta_{\max}$.
Point (\ref{tesi1c}) of Assumption \ref{ass_dir} therefore
follows for any $\eta\in[\eta_{\min},\eta_{\max}]$ by defining
$$c_2=\frac{\nu_1}{\nu_2}\frac{\eta_{\min}^2}{\eta_{\max}^2}\big({{1}\over{\eta_{\max}} }-{{1}\over{2}}\nu_1  \bigl)>0.$$

From the above chain of inequalities, we also get
$\nabla f(x_k)^Td_k+{{1}\over{2}}d_k^TB_kd_k\le 0,$
from which we obtain
$$\nabla f(x_k)^Td_k\le -{{1}\over{2}}d_k^TB_kd_k\le - {{1}\over{2}}\nu_1\|d_k\|^2{,}$$
and hence, condition \eqref{tesi1b} of Assumption \ref{ass_dir} also holds, setting $c_1=\nu_1/2$.
\end{proof}

We might want to elaborate on the condition $\eta_\text{max}<2/\nu_1$: large values of $\nu_1$ indicate that the matrix will {induce a} very strong regularization term within the subproblem, leading to a very small search direction; in an unconstrained scenario, this corresponds to perturbing the gradient direction with an almost null preconditioner. It is therefore not surprising that with large $\nu_1$ we can prove the property for the sequence of directions only if we consider correspondingly small values of $\eta$, i.e., taking into account less stringent measures of stationarity and of descent. A similar observation will analogously hold for the result in the following proposition concerning the case of the $2\times 2$ matrix.

\color{black}
\begin{proposition}
 \label{prop:gr_tilde Hk}
 Let $\{\hat H_k\}\subseteq\mathbb{R}^{2\times 2}$ be a sequence of positive definite symmetric matrices, assuming that $ 0<\hat\nu_1\le\hat\nu_2$ exist such that
\begin{gather}
    \label{assT1bis2}
    \hat\nu_1 \le \lambda_{\min}(\hat H_k) \qquad \text{ and } \qquad {(\hat H_{k})_{11}}\le  \hat\nu_2,
\end{gather}
for all $k$. Let $\{\eta_k\}$ be a sequence of values such that, for all $k$,   condition (\ref{etamax}) holds, assuming that $\eta_{\max}< \frac{2}{\hat\nu_1}$.
 For all $k$, let $H_k\in\mathbb{R}^{2\times 2}$ be the  symmetric  matrix in problem \eqref{prob2x2}, defined as 
 $$
 H_k = D_k\hat H_k D_k\quad\text{where}\quad
D_k=\begin{bmatrix}
 	\|\hat d_k\|& 0\\ 
 	0& \|\hat s_k\|
 \end{bmatrix},
 $$
and let 
    $
    d_k=\alpha_k\hat d_k+\beta_k \hat s_k,
    $
where $[\alpha_k\;\;\beta_k]^T$ is the solution of problem \eqref{prob2x2}.
Then, for any $\eta\in [\eta_{\min},\eta_{\max}]$, the sequence of directions $\{d_k\}$ satisfies Assumption \ref{ass_dir}.
\end{proposition}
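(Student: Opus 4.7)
The plan is to mirror the structure of the proof of Proposition \ref{proposition-d2}, adapting each step to exploit the specific factorization $H_k = D_k \hat H_k D_k$ together with the spectral assumption $\hat\nu_1 \le \lambda_{\min}(\hat H_k)$ and the boundedness of the $(1,1)$ entry by $\hat\nu_2$. Let $u_k = [\alpha_k\;\beta_k]^T$ denote the optimal solution of \eqref{prob2x2}. Condition \eqref{tesi1a} with $a = 1$ is immediate: $u_k$ satisfies $\alpha_k + \beta_k \le 1$, $\alpha_k,\beta_k \ge 0$, so Proposition \ref{feasibility} applies directly.

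For condition \eqref{tesi1c}, I would exploit optimality of $u_k$ for \eqref{prob2x2} by comparison with the feasible point $(\hat\nu_1/\hat\nu_2,0)$, which lies in the admissible region since $\hat\nu_1 \le \hat\nu_2$. The objective at this point reduces to $\tfrac{1}{2}(\hat\nu_1/\hat\nu_2)^2(H_k)_{11} + (\hat\nu_1/\hat\nu_2)\nabla f(x_k)^T\hat d_k$. Using $(H_k)_{11} = \|\hat d_k\|^2 (\hat H_k)_{11} \le \hat\nu_2 \|\hat d_k\|^2$ and Proposition \ref{proposition-app}(i), this is bounded above by $-(\hat\nu_1/\hat\nu_2)(1/\eta_k - \hat\nu_1/2)\|\hat d_k\|^2$. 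Since $\tfrac{1}{2} u_k^T H_k u_k \ge 0$ by positive definiteness of $\hat H_k$, we can drop it from the left-hand side to obtain $\nabla f(x_k)^T d_k \le -(\hat\nu_1/\hat\nu_2)(1/\eta_k - \hat\nu_1/2)\|\hat d_k\|^2$, where $1/\eta_k - \hat\nu_1/2 > 0$ thanks to $\eta_{\max} < 2/\hat\nu_1$. From here, I would apply Lemma \ref{lemma:proj} exactly as in Proposition \ref{proposition-d2}—using part (b) with $\eta_k \le \eta_{\max}$, then part (a) with $\eta \le \eta_{\max}$—to convert the right-hand side into $-c_2 \phi_\eta(x_k)^2$ for every $\eta \in [\eta_{\min},\eta_{\max}]$, with constant
\[
c_2 = \frac{\hat\nu_1}{\hat\nu_2}\frac{\eta_{\min}^2}{\eta_{\max}^2}\left(\frac{1}{\eta_{\max}} - \frac{\hat\nu_1}{2}\right) > 0.
\]

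For condition \eqref{tesi1b}, the same chain of inequalities yields $\nabla f(x_k)^T d_k + \tfrac{1}{2}u_k^T H_k u_k \le 0$, hence $\nabla f(x_k)^T d_k \le -\tfrac{1}{2}u_k^T H_k u_k$. Invoking $\hat H_k \succeq \hat\nu_1 I$ gives $u_k^T D_k \hat H_k D_k u_k \ge \hat\nu_1 u_k^T D_k^2 u_k = \hat\nu_1(\alpha_k^2\|\hat d_k\|^2 + \beta_k^2\|\hat s_k\|^2)$. It then remains to relate this quantity to $\|d_k\|^2$. By expanding $\|d_k\|^2 = \|\alpha_k \hat d_k + \beta_k \hat s_k\|^2$ and applying Cauchy--Schwarz to the cross term $2\alpha_k\beta_k \hat d_k^T \hat s_k$, one gets $\|d_k\|^2 \le (\alpha_k\|\hat d_k\| + \beta_k\|\hat s_k\|)^2 \le 2(\alpha_k^2\|\hat d_k\|^2 + \beta_k^2\|\hat s_k\|^2)$, so condition \eqref{tesi1b} follows with $c_1 = \hat\nu_1/4$.

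The main obstacle I anticipate is the last step: unlike in Proposition \ref{proposition-d2}, the 2D matrix $H_k$ does not directly control $\|d_k\|^2$, because the columns of $P_k$, namely $\hat d_k$ and $\hat s_k$, need not be orthogonal. The factor-of-two loss from Cauchy--Schwarz is what salvages the bound, and I expect this elementary estimate to be tight enough for the constant $c_1 = \hat\nu_1/4$ to emerge cleanly. Apart from this, the rest of the argument is a routine transcription of the $n$-dimensional proof, with $\hat\nu_2$ playing the role of $\nu_2$ only through the $(1,1)$ entry, which is precisely the one activated by the comparison direction $(\hat\nu_1/\hat\nu_2, 0)$.
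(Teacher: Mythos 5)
Your proposal is correct and follows essentially the same route as the paper's proof: the same comparison point $(\hat\nu_1/\hat\nu_2,0)$ activating only the $(1,1)$ entry of $H_k$, the same application of Proposition \ref{proposition-app} and Lemma \ref{lemma:proj} yielding the identical constant $c_2$, and the same $(x+y)^2\le 2(x^2+y^2)$ estimate to obtain $c_1=\hat\nu_1/4$. The "obstacle" you flag at the end is handled in the paper exactly as you anticipate, so there is nothing further to add.
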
	
\begin{proof} 
 \par\noindent
As already observed, any vector $d$ belonging to the feasible set of \eqref{prob2x2} is such that $x_k+d\in\mathcal{S}$, so that the condition (\ref{tesi1a}) of Assumption \ref{ass_dir} is satisfied by $d_k$ for all $k$ with $a=1$.
The objective of problem \eqref{prob2x2} at iteration $k$ can be rewritten as 
\begin{equation*}\label{problem-tilde-Hk-new}
	\begin{aligned}
		\min_{\alpha, \beta}\;&   \frac{1}{2}\ \begin{bmatrix}
			\alpha\\ 
			\beta
		\end{bmatrix}^T D_k \hat H_k D_k\begin{bmatrix}
			\alpha\\ 
			\beta
		\end{bmatrix} +\begin{bmatrix}
			g_k^T\hat d_k\\ 
			g_k^T\hat s_k
		\end{bmatrix}^T \begin{bmatrix}
			\alpha\\ 
			\beta
		\end{bmatrix}.
	\end{aligned}
\end{equation*}
Note that $(\hat \alpha,\hat \beta)^T=(\frac{\hat \nu_1}{\hat\nu_2},0)^T$ is 
a feasible point of the two-dimensional problem. Then  we can consider the corresponding direction $\tilde d_k=\hat \alpha\hat d_k+\hat \beta\hat s_k$ with $(\hat \alpha,\hat \beta)^T=(\frac{\hat \nu_1}{\hat\nu_2},0)^T$. If
 $(\alpha_k,\beta_k)^T$ is the optimal solution of \eqref{prob2x2} and $d_k=\alpha_k\hat{d}_k+\beta_k\hat{s}_k$,
recalling Proposition \ref{proposition-app}, assumption \eqref{etamax} and $\eta_\text{max}<2/\hat{\nu}_1$, and by similar derivations as in the proof of Proposition \ref{proposition-d2},
we can write:
\begin{equation}\nonumber
    \begin{aligned}
        \nabla f(x_k)^Td_k&\le
	\nabla f(x_k)^Td_k+
	\frac{1}{2}\ \begin{bmatrix}
			\alpha_k\\ 
			\beta_k
		\end{bmatrix}^T D_k \hat H_k D_k\begin{bmatrix}
			\alpha_k\\ 
			\beta_k
		\end{bmatrix}\\ &\le\nabla f(x_k)^T\frac{\hat\nu_1}{\hat\nu_2}\hat d_k+
	{{1}\over{2}}(\hat H_{k})_{11}\frac{\hat\nu_1^2}{\hat\nu_2^2}\|\hat d_k\|^2\\ &
	\le -\frac{\hat\nu_1}{\hat\nu_2}{{1}\over{\eta_k}}\|\hat {d}_k\|^2+{{1}\over{2}}\frac{\hat\nu_1^2}{\hat\nu_2}\|\hat {d}_k\|^2\\&\le-\frac{\hat\nu_1}{\hat\nu_2}\left({{1}\over{\eta_k}}-{{1}\over{2}}\hat\nu_1\right)\|{\cal P}_\mathcal{S}[x_k-\eta_k\nabla f(x_k)]-x_k\|^2\\
    	&\le -\frac{\hat\nu_1}{\hat\nu_2}\left({{1}\over{\eta_{\max}}}-{{1}\over{2}}\hat\nu_1\right)\|{\cal P}_\mathcal{S}[x_k-\eta_k\nabla f(x_k)]-x_k\|^2\\
 &=-\frac{\hat\nu_1}{\hat\nu_2}\eta_k^2\big({{1}\over{\eta_{\max}}} -{{1}\over{2}}\hat\nu_1\bigl)\frac{1}{\eta_k^2}\|{\cal P}_\mathcal{S}[x_k-  \eta_k\nabla f(x_k)]-x_k\bigr\|^2\\  
     &\le -\frac{\hat\nu_1}{\hat\nu_2}\eta_{\min} ^2\big({{1}\over{\eta_{\max}} }-{{1}\over{2}}\hat\nu_1  \bigl)\frac{1}{\eta_{\max}^2}\|{\cal P}_\mathcal{S}[x_k-  \eta_{\max}\nabla f(x_k)]-x_k\bigr\|^2\\
          &\le -\frac{\hat\nu_1}{\hat\nu_2}\eta_{\min}^2\big({{1}\over{\eta_{\max}} }-{{1}\over{2}}\hat\nu_1  \bigl)\frac{1}{\eta_{\max}^2}\|{\cal P}_\mathcal{S}[x_k-  \eta\nabla f(x_k)]-x_k\bigr\|^2\\
    &\le -c_2\phi_\eta(x_k){^2},
    \end{aligned}
\end{equation}
 where 
 $$c_2=\frac{\hat\nu_1}{\hat\nu_2}\frac{\eta_{\min}^2}{\eta_{\max}^2}\big({{1}\over{\eta_{\max}} }-{{1}\over{2}}\hat\nu_1  \bigl)>0{,}$$
so that condition \eqref{tesi1c}  of Assumption \ref{ass_dir} is proved.

Then, from the above chain of inequalities and by using \eqref{assT1bis2}, we have
\begin{align*}
    \nabla f(x_k)^Td_k&\le
	-\frac{1}{2}\ \begin{bmatrix}
			\alpha_k\\ 
			\beta_k
		\end{bmatrix}^T D_k \hat H_k D_k\begin{bmatrix}
			\alpha_k\\ 
			\beta_k
		\end{bmatrix} \le -
	{{1}\over{2}} \hat \nu_1\left((\alpha_k\|\hat d_k\|)^2+(\beta_k\|\hat s_k\|)^2\right)\\&\le -
	{{1}\over{4}} \hat \nu_1\|\alpha_k\hat d_k+\beta_k\hat s_k\|^2=-
	{{1}\over{4}} \hat \nu_1\|d_k\|^2.
\end{align*}
Therefore, condition (\ref{tesi1b}) of the Assumption holds by setting
$c_1=\frac{\hat \nu_1}{4}.$
\end{proof}

\begin{remark}
Following the approach presented in \cite{birgin2000nonmonotone}, it is possible to choose the parameter $\eta_k$ by computing the inverse Rayleigh quotient. Specifically, given two constants $0 < \eta_{\min}\le\eta_{\max}<+\infty$, we can choose $\eta_k$ as:
\[\eta_k=\min\bigg\{\eta_{\max},\max\big\{\eta_{\min},\frac{s_k^Ts_k}{s_k^Ty_k}\big\}\bigg\},\]
where $y_k=\nabla f(x_k)-\nabla f(x_{k-1})$ and $s_k=x_k-x_{k-1}$, with the choice of $\eta_k=\eta_{\max}$ if $s_k^Ty_k\le 0$.
\end{remark}

In the next section, we present a specific projected gradient method with momentum where the search direction is induced by a tailored $2\times 2$ matrix $H_k$.
\section{A convergent projected gradient method with momentum}
\label{sec:alg}
The analysis and the remarks from the preceding section drive us towards the design of a  convergent projected gradient method with momentum. Summarizing for convenience the main insights discussed thus far, we will have to exploit search directions of the form $
d_k=\alpha_k\hat d_k+\beta_k\hat s_k,
$
where $\alpha_k,\beta_k\ge 0$, and, for $k\ge 1$,
\begin{equation*}
    \hat d_k={\cal P}_\mathcal{S}[x_k-\eta_k\nabla f(x_k)]-x_k,\quad\quad
\hat s_k={\cal P}_\mathcal{S}[x_k+(x_k-x_{k-1})]-x_k,
\end{equation*}
with $\eta_{\min}\le \eta_k\le \eta_{\max}$,
in such a way that Assumption \ref{ass_dir} holds.

While the analysis provides us with two sound options to obtain suitable values of $\alpha_k$ and $\beta_k$, we will focus here only on the latter, based on the two-dimensional subproblems, which appears to be more convenient from the numerical standpoint.

The key computational element of the algorithmic framework lies, therefore, in the definition of the $2\times 2$ matrix $H_k$ in subproblem \eqref{prob2x2}, which we also rewrite below for convenience:
\begin{equation*}
\label{prob2x2bis}
    \begin{aligned}
    \min_{\alpha,\beta} \ &\frac{1}{2}\ \begin{bmatrix}
			\alpha\\
			\beta
		\end{bmatrix}^TH_k\begin{bmatrix}
		\alpha\\
		\beta
	\end{bmatrix} +g_k^T  P_k\begin{bmatrix}
	\alpha\\
	\beta
\end{bmatrix} \quad	\text{ s.t. }
	\quad \alpha+\beta \le 1,\quad \alpha\ge 0,\; \beta \ge 0.
\end{aligned}
\end{equation*}
The solution $(\alpha^* \ \ \beta^*)^T$ of the above subproblem provides the scalars $\alpha_k=\alpha^*$ and
$\beta_k=\beta^*$ defining the search direction $d_k$ given by \eqref{grad_mom}. In the above problem, all parameters are fixed given $x_k$, except for the three elements defining the symmetric matrix $H_k$, which thus leave three degrees of freedom.

To identify a meaningful matrix $H_k$ to employ in the subproblem, we interpret the objective function as the quadratic Taylor polynomial approximation of the bivariate function of variables $\alpha$ and $\beta$, $f(x_k+\alpha \hat d_k+\beta \hat s_k)$:
\begin{equation*}\label{QuadModel2_2}
\varphi_k(\alpha,\beta{;H_k})=
f(x_k)+\begin{bmatrix}
		g_k^T\hat d_k\\
		g_k^T\hat s_k
	\end{bmatrix}^T \begin{bmatrix}
			\alpha\\
			\beta
		\end{bmatrix}
		+\frac{1}{2}
		\begin{bmatrix}
			\alpha\\
			\beta
		\end{bmatrix}^T
\left[\begin{array}{cc}
		H_{11} & H_{12}  \\
		H_{12} & H_{22}
	\end{array}\right]
\begin{bmatrix}
			\alpha\\
			\beta
		\end{bmatrix}.
\end{equation*}
The three elements defining $H_k$ can then be determined by imposing the interpolation conditions on
three points  $(\alpha_1,\beta_1)$, $(\alpha_2,\beta_2)$, and $(\alpha_3,\beta_3)$ different from $(0,0)$:
\begin{equation}
\label{eq:interp_condition}
    \begin{aligned}
        \varphi_k(\alpha_1,\beta_1{;H_k})=f(x_k+\alpha_1 \hat d_k+\beta_1 \hat s_k),\\
	\varphi_k(\alpha_2,\beta_2{;H_k})=f(x_k+\alpha_2 {\hat d_k}+\beta_2\hat s_k),\\
	\varphi_k(\alpha_3,\beta_3{;H_k})= f(x_k+\alpha_3 \hat d_k+\beta_3\hat s_k),
    \end{aligned}
\end{equation}
and thus by simply solving a $3\times 3$ linear system, which can be handled in closed form. The {construction} of a reliable quadratic matrix incurs the cost of a small number of additional function evaluations per iteration (at most 3).

Before formally presenting the algorithm, we summarize its main steps at each iteration:
\begin{itemize}
\item[(a)] a quadratic subproblem of the form \eqref{prob2x2} is defined,
where $H_k\in\mathbb{R}^{2\times 2}$ is a symmetric matrix obtained by the aforementioned interpolation technique;
\item[(b)] once a solution $[\alpha_k \ \beta_k]^T$ of \eqref{prob2x2} has been computed, the corresponding search direction of the form \eqref{grad_mom}
undergoes a check to ensure the gradient-related property {(with fixed $\phi_\eta$ induced by a constant value $\eta$)} of the whole sequence $\{d_k\}$;
\item[(c)] if the test is satisfied, then a standard {Armijo}-type line search can be performed along $d_k$; otherwise, a suitable modification to $H_k$ is made that guarantees the conditions of Proposition \ref{prop:gr_tilde Hk} to hold: steps (a) (with the modified ${\tilde H_k}$) and (b) (without the check on $d_k$) are then repeated, as well as the {Armijo}-type line search along the obtained $d_k$.
\end{itemize}

\begin{algorithm}[htbp]
	\caption{\texttt{Projected Gradient Method with Momentum (PGMM)}}
	\label{alg:pgm}
	\algnewcommand{\LineComment}[1]{\State \texttt{\( \slash\ast \) #1 \( \ast\slash \)}}
	\begin{algorithmic}[1]
		\State Input: $x_0\in {\cal S}$, $\gamma\in(0,1)$, $\delta\in(0,1)$, $0<\eta_{\text{min}}\le\eta\le\eta_{\text{max}}$, $\bar c_1,\bar c_2>0$, $ 0<\hat\nu_1\le \hat \nu_2$.
		\State Set $k= 0$
        \State Let $x_{-1} = x_0$
		\For{$k=0,1,\ldots$}
        \State Set $s_k = x_k-x_{k-1}$, $y_k=\nabla f(x_k)-\nabla f(x_{k-1})$
        \LineComment{Define spectral parameter}
        \If{$s_k^Ty_k>0$}
        \State Set $\eta_k=\min\bigg\{\eta_{\max},\max\big\{\eta_{\min},\frac{s_k^Ts_k}{s_k^Ty_k}\big\}\bigg\}$
        \Else
        \State Set $\eta_k = \eta_\text{max}$
        \EndIf
        \LineComment{Define base directions}
        \State Set $\hat{d}_k = \mathcal{P}_\mathcal{S}[x_k-\eta_k \nabla f(x_k)]-x_k$, $\hat{s}_k = \mathcal{P}_\mathcal{S}[x_k+s_k]-x_k$
        \LineComment{Stop if $x_k$ is a stationary point}
        \If{$\hat d_k =0$}   
        \State \Return $x_k$
        \EndIf

        \If{$\hat{s}_k\neq 0$}
        \LineComment{Common nondegenerate case}
        \LineComment{Define $2\times 2$ quadratic matrix by interpolation}
        \State {Compute $H_k$ imposing conditions \eqref{eq:interp_condition} at three points $(\alpha_1,\beta_1)$, $(\alpha_2,\beta_2)$, $(\alpha_3,\beta_3)$} 
		\LineComment{Compute the search direction}
        \State Set $\alpha_k,\beta_k\in \arg\min_{\alpha,\beta\ge 0,\alpha+\beta\le 1} \varphi_k(\alpha,\beta{;H_k})$ obtained using Algorithm \ref{alg:2dim}
		\State Set $d_k= \alpha_k \hat{d}_k+\beta_k \hat{s}_k$
        \LineComment{Gradient-related check}
        \State {Set \textit{\texttt{gr\_dir\_found}} = \texttt{False}}
		\If{ \label{step:check}$\nabla f(x_k)^Td_k\! \le\!  -\bar c_1\|d_k\|^2$ \textbf{and}  $\nabla f(x_k)^Td_k \!\le\!-\bar c_2\|\mathcal{P}_\mathcal{S}[x_k\!-\!\eta \nabla f(x_k)]{\!-\!x_k}\|^2$} 
        \State Set \textit{\texttt{gr\_dir\_found}} = \texttt{True}
        \EndIf
        \If{\texttt{gr\_dir\_found}= \texttt{False}}    
        \LineComment{Modify $2\times 2$ matrix}
        \State \label{step:fix_new} Set $(\tilde{H}_k)_{11} = \max\big\{ \min\{(H_k)_{11},\hat\nu_2\|\hat d_k\|^2\},\hat\nu_1\|\hat d_k\|^2\big\}$ 
        \State Set $(\tilde{H}_k)_{22} = \max\big\{ (H_k)_{22},\hat\nu_1\|\hat s_k\|^2\big\}$
        \State Set $\tilde r_k=\sqrt{\big((\tilde{H}_k)_{11}-\hat\nu_1\|\hat d_k\|^2\big)\big((\tilde{H}_k)_{22}-\hat\nu_1\|\hat s_k\|^2\big)}$
        \State Set $(\tilde{H}_k)_{12} = \max\big\{\min\{(H_k)_{12}, \tilde r_k\}, -\tilde r_k\big\}$
        \State \label{step:fix2_new} Set ${\tilde H_k}=\begin{pmatrix}(\tilde {H}_k)_{11} &(\tilde{H}_k)_{12}\\(\tilde{H}_k)_{12} &(\tilde{H}_k)_{22}\end{pmatrix}$
        \LineComment{Recompute the search direction}
        \State Set $\alpha_k,\beta_k\in \arg\min_{\alpha,\beta\ge 0,\alpha+\beta\le 1} \varphi_k(\alpha,\beta{;\tilde H_k})$ obtained using Algorithm \ref{alg:2dim}
		\State Set $d_k= \alpha_k\hat{d}_k+\beta_k \hat{s}_k$
        \EndIf
		\Else \LineComment{No momentum, do simple projected gradient iteration}
        \State $d_k = \hat{d}_k$
        \EndIf
        
        \LineComment{Perform Armijo line search along $d_k$}
        
        \State Set $\mu = 1$
        \While{ $f(x_k+\mu d_k) > f(x_k) + \gamma \mu \nabla f(x_k)^T d_k$}
        \State Set $\mu = \delta\mu$
        \EndWhile
        \State Set $\mu_k = \mu$,

		\State Set $x_{k+1} = x_k + \mu_kd_k$
		\EndFor
		%
	\end{algorithmic}
\end{algorithm}

Algorithm \ref{alg:pgm} describes the instruction{s} of the proposed procedure rigorously and in detail. In the following lemma we prove that the modified matrix ${\tilde H_k}$ 
computed as in lines \ref{step:fix_new}--\ref{step:fix2_new} satisfies the assumptions of Proposition \ref{prop:gr_tilde Hk}.

\begin{lemma}
\label{lemma:hatHk} Let $\|\hat d_k\|\not=0$, $\|\hat s_k\|\not=0$, and let ${\tilde H_k}$ be the matrix computed in lines \ref{step:fix_new}--\ref{step:fix2_new} of Algorithm {\ref{alg:pgm}}. Then, ${\tilde H_k}=D_k\hat H_k D_k$, where $D_k=
\begin{bmatrix}
\|\hat d_k\| & 0 \\ 0 & \|\hat s_k\|
\end{bmatrix}
$ and $\hat H_k$ is a $2 \times 2$ symmetric matrix such as $(\hat H_k)_{11}\le \hat\nu_2$ and $\lambda_{\min}(\hat H_k)\ge \hat\nu_1$.
\end{lemma}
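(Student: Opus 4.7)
The plan is to simply invert the diagonal conjugation relation. Since $\|\hat d_k\|\neq 0$ and $\|\hat s_k\|\neq 0$, the matrix $D_k$ is invertible. I would therefore define
\[
\hat H_k := D_k^{-1}H_kD_k^{-1},
\]
so that the identity $H_k=D_k\hat H_k D_k$ is satisfied by construction, and symmetry of $\hat H_k$ follows at once from the symmetry of $H_k$ (which is clear from lines \ref{step:fix_new}--\ref{step:fix2_new}, where $H_k$ is explicitly set to be symmetric with $(\tilde H_k)_{12}$ on both off-diagonal entries). The entries of $\hat H_k$ are then
\[
(\hat H_k)_{11}=\frac{(\tilde H_k)_{11}}{\|\hat d_k\|^2},\quad (\hat H_k)_{22}=\frac{(\tilde H_k)_{22}}{\|\hat s_k\|^2},\quad (\hat H_k)_{12}=\frac{(\tilde H_k)_{12}}{\|\hat d_k\|\,\|\hat s_k\|}.
\]

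The bound $(\hat H_k)_{11}\le\hat\nu_2$ is then an immediate consequence of the clipping performed in line \ref{step:fix_new}: by construction $(\tilde H_k)_{11}\le\hat\nu_2\|\hat d_k\|^2$, and dividing by $\|\hat d_k\|^2$ gives the claim. The same line, together with the floor $\hat\nu_1\|\hat d_k\|^2$, also yields $(\hat H_k)_{11}\ge\hat\nu_1$, and analogously the assignment of $(\tilde H_k)_{22}$ gives $(\hat H_k)_{22}\ge\hat\nu_1$.

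The only non-routine step is the eigenvalue bound $\lambda_{\min}(\hat H_k)\ge\hat\nu_1$. I would argue it by showing that $\hat H_k-\hat\nu_1 I$ is positive semidefinite, using Sylvester's criterion: the two diagonal entries are nonnegative by the previous paragraph, so it remains to verify that the determinant is nonnegative, i.e.,
\[
\bigl((\hat H_k)_{11}-\hat\nu_1\bigr)\bigl((\hat H_k)_{22}-\hat\nu_1\bigr)-(\hat H_k)_{12}^2\ge 0.
\]
Multiplying through by $\|\hat d_k\|^2\|\hat s_k\|^2$, this reduces to
\[
\bigl((\tilde H_k)_{11}-\hat\nu_1\|\hat d_k\|^2\bigr)\bigl((\tilde H_k)_{22}-\hat\nu_1\|\hat s_k\|^2\bigr)\ge (\tilde H_k)_{12}^2,
\]
which is exactly the inequality $\tilde r_k^2\ge (\tilde H_k)_{12}^2$ enforced by the clipping of $(\tilde H_k)_{12}$ to $[-\tilde r_k,\tilde r_k]$. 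This closes the argument; the main subtlety is simply recognising that the square root $\tilde r_k$ is well defined as a real number precisely because the two factors $(\tilde H_k)_{11}-\hat\nu_1\|\hat d_k\|^2$ and $(\tilde H_k)_{22}-\hat\nu_1\|\hat s_k\|^2$ are both nonnegative by construction.
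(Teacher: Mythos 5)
Your proposal is correct and follows essentially the same route as the paper: both rescale by $D_k^{-1}$ to obtain the explicit entries of $\hat H_k$, read off $\hat\nu_1\le(\hat H_k)_{11}\le\hat\nu_2$ and $(\hat H_k)_{22}\ge\hat\nu_1$ from the clipping, and reduce the eigenvalue bound to the inequality $(\hat H_k)_{12}^2\le\bigl((\hat H_k)_{11}-\hat\nu_1\bigr)\bigl((\hat H_k)_{22}-\hat\nu_1\bigr)$ enforced by the clipping of the off-diagonal entry to $[-\tilde r_k,\tilde r_k]$. The only, immaterial, difference is the final step: you certify $\hat H_k-\hat\nu_1 I\succeq 0$ via nonnegativity of all principal minors of a $2\times 2$ symmetric matrix (which is the correct criterion for semidefiniteness here, rather than Sylvester's leading-minor test for definiteness), while the paper plugs the same inequality into the closed-form expression for $\lambda_{\min}$ of a $2\times 2$ symmetric matrix.
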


\begin{proof}
We can explicitly compute the components of $\hat H_k$ as:
\begin{gather}
\label{eqn:H11}
(\hat H_k)_{11} = \frac{(\tilde H_k)_{11}}{\|\hat d_k\|^2}=\max\bigg\{ \min\big\{\frac{(H_k)_{11}}{\|\hat d_k\|^2},\hat\nu_2\big\},\hat\nu_1\bigg\},\\
\label{eqn:H22}
(\hat H_k)_{22} = \frac{(\tilde H_k)_{22}}{\|\hat s_k\|^2}= \max\bigg\{ \frac{(H_k)_{22}}{\|\hat s_k\|^2},\hat\nu_1\bigg\},\\
\label{eqn:H12}
(\hat H_k)_{12} = \frac{(\tilde H_k)_{12}}{\|\hat d_k\|\|\hat s_k\|}= \max\bigg\{\min\big\{\frac{(H_k)_{12}}{\|\hat d_k\|\|\hat s_k\|}, r_k\big\}, -r_k\bigg\},
\end{gather}
where
\begin{equation}
\label{eqn:rk}
\begin{aligned}
r_k &= \frac{\tilde r_k}{\|\hat d_k\|\|\hat s_k\|}= \sqrt{\bigg(\frac{(\tilde{H}_k)_{11}}{\|\hat d_k\|^2}-\hat\nu_1\bigg)\bigg(\frac{(\tilde{H}_k)_{22}}{\|\hat s_k\|^2}-\hat\nu_1\bigg)}\\&=\sqrt{\big((\hat{H}_k)_{11}-\hat\nu_1\big)\big((\hat{H}_k)_{22}-\hat\nu_1\big)}.    
\end{aligned}
\end{equation}
By \eqref{eqn:H11}, $(\hat H_k)_{11}\le \hat\nu_2$. Moreover, 
    \begin{align*}
    &\sqrt{\big((\hat H_k)_{11}-(\hat H_k)_{22}\big)^2+4(\hat H_k)_{12}^2}  \\
    &\quad \le  \sqrt{\big((\hat H_k)_{11}-(\hat H_k)_{22}\big)^2+4\big((\hat{H}_k)_{11}-\hat\nu_1\big)\big((\hat{H}_k)_{22}-\hat\nu_1\big)}\\
     &\quad = \sqrt{\big((\hat H_k)_{11}-(\hat H_k)_{22}\big)^2+4(\hat{H}_k)_{11}(\hat{H}_k)_{22}-4\hat\nu_1\big((\hat{H}_k)_{11}+(\hat{H}_k)_{22}\big)+4\hat\nu_1^2}\\
     & \quad =  \sqrt{\big((\hat H_k)_{11}+(\hat H_k)_{22}\big)^2-4\hat\nu_1\big((\hat{H}_k)_{11}+(\hat{H}_k)_{22}\big)+4\hat\nu_1^2}\\
     &\quad = \sqrt{\big((\hat H_k)_{11}+(\hat H_k)_{22}-2\hat\nu_1\big)^2} = (\hat H_k)_{11}+(\hat H_k)_{22}-2\hat\nu_1,
    \end{align*}
    where in the first inequality we employed that, by \eqref{eqn:H12}--\eqref{eqn:rk}, \[(\hat{H}_k)_{12}^2\le\big((\hat{H}_k)_{11}-\hat\nu_1\big)\big((\hat{H}_k)_{22}-\hat\nu_1\big),\] and in the last equality that, by \eqref{eqn:H11}--\eqref{eqn:H22}, $(\hat H_k)_{11}\ge \hat \nu_1$ and $(\hat H_k)_{22}\ge \hat \nu_1$. As a consequence of the above chain of inequalities, the minimum eigenvalue of $\hat H_k$ is bounded by:
    \begin{align*}
        \lambda_{\min}(\hat H_k) &= \frac{(\hat H_k)_{11}+(\hat H_k)_{22}-\sqrt{\big((\hat H_k)_{11}-(\hat H_k)_{22}\big)^2+4(\hat H_k)_{12}^2}}{2}\\
        & \ge \frac{(\hat H_k)_{11}+(\hat H_k)_{22}-\big((\hat H_k)_{11}+(\hat H_k)_{22}- 2\hat\nu_1\big)}{2} = \hat\nu_1.
    \end{align*}
\end{proof}

Recalling the results proven in this paper, we are finally able to state the main convergence and complexity results for Algorithm \ref{alg:pgm}.

\begin{proposition}
    \label{prop:pgm_complexity}
	Let Assumption \ref{assumption1} hold and let $\{x_k\}$ be the sequence produced by Algorithm \ref{alg:pgm}. 
	Then, if $\eta_{\max}<\frac{2}{\hat \nu_1}$,  for any $\eta\in[\eta_\text{min},\eta_{\text{max}}]$ and $\eps>0$ and recalling the notation of Proposition \ref{prop:complexity}, we have $k_\eps = {\cal O}(\eps^{-2})$ and $ni_\eps = {\cal O}(\eps^{-2})$.
    The asymptotic bound of $\mathcal{O}(\frac{1}{\eps^2})$ also holds for the number of function evaluations, gradient evaluations and projections carried out by the algorithm in the first $k_\eps$ iterations. 
    Moreover, if $\mathcal S$ is compact, {either the algorithm stops in a finite number $\bar{k}$ of iterations returning a stationary point $x_{\bar{k}}$}, or the sequence $\{x_k\}$ has accumulation points, each one being stationary for problem \eqref{prob_main}. 
\end{proposition}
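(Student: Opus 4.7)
The plan is to reduce the statement to an application of Proposition \ref{prop:complexity} and Corollary \ref{corollary:globconv}, which requires us to verify that the sequence of directions $\{d_k\}$ produced by Algorithm \ref{alg:pgm} satisfies Assumption \ref{ass_dir} with uniform constants $a,c_1,c_2>0$ for any $\eta\in[\eta_{\min},\eta_{\max}]$. The argument splits according to the branches of the algorithm. When $\hat s_k=0$, the direction coincides with $\hat d_k$, and Remark \ref{remark:pg_dir} gives the three conditions immediately with $a=1$ and constants $1/\eta_k$; Lemma \ref{lemma:proj} then allows replacing $\eta_k$ by the uniform bounds in $[\eta_{\min},\eta_{\max}]$. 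When $\hat s_k\neq 0$ and the gradient-related test at line \ref{step:check} passes, feasibility of $d_k$ comes from Proposition \ref{feasibility} (hence $a=1$), while conditions \eqref{tesi1b}-\eqref{tesi1c} are verified by construction with $c_1=\bar c_1$ and $c_2=\bar c_2$ (the dependence on $\eta$ is again handled through Lemma \ref{lemma:proj}).

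The remaining case is the one where the check fails and the matrix is replaced according to lines \ref{step:fix_new}-\ref{step:fix2_new}. Here the key step is to invoke Lemma \ref{lemma:hatHk}, which certifies that the modified matrix $H_k$ factors as $D_k\hat H_k D_k$ with $\hat H_k$ satisfying the spectral conditions $\hat\nu_1\le\lambda_{\min}(\hat H_k)$ and $(\hat H_k)_{11}\le\hat\nu_2$. Since $\eta_{\max}<2/\hat\nu_1$ by hypothesis, Proposition \ref{prop:gr_tilde Hk} applies and the resulting $d_k$ satisfies Assumption \ref{ass_dir} with explicit constants determined by $\hat\nu_1,\hat\nu_2,\eta_{\min},\eta_{\max}$. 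Taking the minimum of the $c_1$'s and $c_2$'s (and $a=1$) across the three cases yields uniform positive constants, so $\{d_k\}$ satisfies Assumption \ref{ass_dir} over the whole run of Algorithm \ref{alg:pgm}.

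Once this uniform gradient-relatedness is established, the iteration complexity bounds $k_\eps=\mathcal{O}(\eps^{-2})$ and $ni_\eps=\mathcal{O}(\eps^{-2})$ follow directly from Proposition \ref{prop:complexity}, noting that the Armijo line search in Algorithm \ref{alg:pgm} is started with $\mu=1=a$, so the argument of that proposition applies verbatim. For the oracle complexity, each iteration of Algorithm \ref{alg:pgm} uses a constant number of gradient evaluations (one) and projections (two, for $\hat d_k$ and $\hat s_k$), plus a bounded number of function evaluations: three for the interpolation \eqref{eq:interp_condition}, plus the Armijo backtracking, whose per-iteration cost is bounded as in the remark following Proposition \ref{prop:complexity}. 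Multiplying these constant per-iteration costs by $k_\eps$ yields the claimed $\mathcal{O}(\eps^{-2})$ bound for function evaluations, gradient evaluations, and projections. Finally, when $\mathcal{S}$ is compact, the asymptotic statement follows from Corollary \ref{corollary:globconv}, applied to the sequence $\{x_k\}$ produced by Algorithm \ref{alg:pgm}, viewed as an instance of Algorithm \ref{alg:qps}.

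The main obstacle is the careful handling of the three branching cases so as to obtain a single set of uniform constants, especially ensuring that the constants from the modified-matrix branch, which depend on $\hat\nu_1,\hat\nu_2,\eta_{\min},\eta_{\max}$ via Proposition \ref{prop:gr_tilde Hk}, are combined consistently with the simpler constants $\bar c_1,\bar c_2$ of the passing-check branch and the $\hat d_k$-only branch; the rest is bookkeeping that invokes previously proven results.
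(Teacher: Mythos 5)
Your proposal is correct and follows essentially the same route as the paper: verify Assumption \ref{ass_dir} branch by branch (degenerate $\hat s_k=0$ case via Remark \ref{remark:pg_dir}, passing check via the explicit test with $\bar c_1,\bar c_2$, failing check via Lemma \ref{lemma:hatHk} and Proposition \ref{prop:gr_tilde Hk}), take the minimum of the resulting constants, and then invoke Proposition \ref{prop:complexity} and Corollary \ref{corollary:globconv}. Your explicit accounting of the per-iteration oracle costs is slightly more detailed than the paper's proof, which defers that point to the remark following Proposition \ref{prop:complexity}, but the argument is the same.
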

\begin{proof}
    {We focus on the nontrivial case for which $\hat d_k \not = 0$ for all $k$; otherwise, by definition of stationarity, we have finite termination with a stationary point.} The proof for the complexity results directly follows from Proposition \ref{prop:complexity}, noting that the algorithm carries out an Armijo line search and, for any choice of $\eta_{\min}\le\eta\le\eta_{\max}$, the sequence $\{d_k\}$ satisfies Assumption \ref{ass_dir}. In particular, condition \eqref{tesi1a} always holds with $a=1$. As for conditions \eqref{tesi1b} and \eqref{tesi1c}, we can note that: 
    \begin{itemize}
        \item for each $k$ such that $\hat{s}_k\neq 0$ and  \texttt{gr\_dir\_found}$=$True, the safeguard at step \ref{step:check} ensures that $d_k$ satisfies the inequalities  for the values $c_1=\bar{c}_1$ and $c_2=\bar{c}_2$ provided as inputs to the algorithm;
        \item if  $\hat{s}_k\neq 0$ but the  check at step \ref{step:check} fails, the perturbation of matrix $H_k$ carried out at steps \ref{step:fix_new}--\ref{step:fix2_new}  guarantees that the assumption of Proposition \ref{prop:gr_tilde Hk} holds (see Lemma \ref{lemma:hatHk}); $d_k$ is then constructed in such a way that the two inequalities hold for constants
    $c_1=\frac{\hat \nu_1}{4}$ and $ c_2=\frac{\hat \nu_1}{\hat \nu_2}\frac{\eta_{\min}^2}{\eta_{\max}^2}\big({{1}\over{\eta_{\max}} }-{{1}\over{2}}\hat \nu_1  \bigl),$
    retrieved in the proof of Proposition \ref{prop:gr_tilde Hk};
    \item  if $\hat{s}_k = 0$, $d_k$ is equal to $\hat{d}_k$ and, 
    {by Proposition \ref{proposition-app} (see also Remark \ref{remark:pg_dir}) and the fact that $\eta_k \le \eta_{\max}$, 
    \[\nabla f(x_k)^T\hat d_k \le - \frac{1}{\eta_k}\|\hat d_k\|^2 \le - \frac{1}{\eta_{\max}}\|\hat d_k\|^2.\]
    Therefore, the first inequality is satisfied with $c_1=\frac{1}{\eta_{\max}}$. Moreover, reasoning similarly to the proof of Propositions \ref{proposition-d2}-\ref{prop:gr_tilde Hk},
    \[- \frac{1}{\eta_{\max}}\|\hat d_k\|^2 \le - \frac{\eta_{\min}^2}{\eta_{\max}^3}\|\mathcal P_{\mathcal S}[x_k-\eta \nabla f(x_k)]-x_k \|^2.\] 
    Hence, the second inequality is satisfied with $c_2 = \frac{\eta_{\min}^2}{\eta_{\max}^3}$.}
    \end{itemize}
    Hence, the entire sequence satisfies \eqref{tesi1b} and \eqref{tesi1c} for all $k$ with  $$c_1 = \min\left\{\bar{c}_1,\frac{\hat \nu_1}{4}, \frac{1}{\eta_{\text{max}}}\right\},\qquad c_2=\min\left\{\bar{c}_2,\frac{\hat \nu_1}{\hat \nu_2}\frac{\eta_{\min}^2}{\eta_{\max}^2}\big({{1}\over{\eta_{\max}} }-{{1}\over{2}}\hat \nu_1  \bigl), {\frac{\eta_{\min}^2}{\eta_{\max}^3}}\right\}.$$  The asymptotic convergence result then follows from Corollary \ref{corollary:globconv}.
\end{proof}


\section{Numerical results}
\label{sec:exp}
In this section, we report the results of computational experiments carried out with the aim of assessing the potential of the proposed approach compared to baseline and state-of-the-art methods.
The experimental analysis was divided into two parts, considering different classes of problems, which are discussed separately in the following.

Both scenarios, however, share the same general experimental setup: we compare our solver (PGMM) with the Spectral Projected Gradient Method (SPG) \cite{birgin2000nonmonotone,birgin2001algorithm}, whose official MATLAB implementation is available at \url{https://www.ime.usp.br/~egbirgin/tango/index.php}. The SPG is widely regarded as the state-of-the-art projection based descent method, but it arguably also represents a baseline for our study, as it relies on simple gradient directions without momentum or preconditioning terms.  
For a fair comparison, the proposed Algorithm \ref{alg:pgm} was implemented within the SPG library code, introducing only the modifications required to define PGMM. The algorithm code, together with that concerning the experiments, is available at \url{https://github.com/DiegoScuppa/PGMM}. All experiments were carried out in a MATLAB R2025a environment.

In SPG, a non-monotone line search \cite{grippo1986nonmonotone} is employed, considering the last $M$ iterations, combined with a safeguarded quadratic interpolation strategy; the related parameters were set to the default values $M=10$, $\gamma=10^{-4}$, $\delta=0.5$, $\sigma_{\min}=0.1$, and $\sigma_{\max}=0.9$. The same safeguarded quadratic interpolation strategy was maintained for use within the PGMM; however, we did not employ the non-monotone condition for the line search{: contrary to SPG, which benefits from the non-monotone approach, we found the latter to be} somewhat detrimental for the specific case of the proposed method in preliminary experiments. 
For the first iteration in both algorithms, we set $\eta_0=1/\|\mathcal{P}_\mathcal{S}[x_0- \nabla f(x_0)]\|_{\infty}$ and $d_0 = \hat d_0$. 
As for the computation of the matrix $H_k$, interpolation was performed using the points $(\alpha_1,\beta_1)=(0,\frac{1}{2})$, $(\alpha_2,\beta_2)=(\frac{1}{2},0)$, and $(\alpha_3,\beta_3)=(\frac{1}{2},\frac{1}{2})$.

Concerning the stopping criterion, both algorithms were stopped  as soon as condition $\bigl\|{\cal P}_\mathcal{S}[x_k-\nabla f(x_k)]-x_k\bigr\|_{\infty}\le 10^{-5}$ was achieved. Additional stopping conditions included a maximum number of $10^5$ iterations and a numerical control on $\|x_k-x_{{k}-1}\|^2$, terminating the algorithm if it dropped below $10^{-15}$.

\subsection{Ball-constrained problems}
As a first set of experiments, we considered nonlinear optimization problems with $\ell_1$-ball constraints. In particular, we focused on a small benchmark of machine learning problems consisting of constrained logistic regression (LR) tasks \cite{hastie2009elements}, where solution sparsity and generalization properties are enhanced by setting an upper bound on the $\ell_1$-norm of the weights rather than by setting an $\ell_1$-regularization term \cite{lee2006efficient,liu2009large}. 

The $\ell_1$-ball of radius $R$ is defined as
$B_1(R)=\{x\in \mathbb{R}^n: \|x\|_1 \le R\}$, and
the exact projection onto $B_1(R)$ is readily obtainable via fast procedures \cite{condat2016fast}.

The structure of the resulting problem is, therefore, particularly well suited as a test case for the algorithm considered in this work. We employed a benchmark of ten LR training problems to evaluate the performance improvement achieved by Algorithm~\ref{alg:pgm} over SPG. The problems were built using the datasets 
reported in Table \ref{tab:LRproblems} and all available at \url{https://www.csie.ntu.edu.tw/~cjlin/libsvmtools/datasets/binary.html}. The objective function is the (convex) negative log-likelihood function 
\begin{equation}
    \label{eq:nll}
    f(w)=\frac{1}{m}\sum_{i=1}^m\log (1+\exp(-y_i(Xw)_i)),
\end{equation}
where $X\in\mathbb{R}^{m \times n}$ is the data matrix, $y\in\{-1,1\}^m$ is the labels vector, and $w\in\mathbb{R}^n$ denotes the model parameters.


\begin{table}[h]
\centering
\begin{tabular}{ l l l  l l l }
\hline
Problem & $n$ & $m$ & Problem & $n$ & $m$ \\
\hline
\texttt{a1a}  & 124 & 1605 & \texttt{phishing} & 69 & 11055  \\

\texttt{a9a}  & 124 & 32561 &  \texttt{sonar} & 61 & 208 \\

\texttt{gisette}  & 5001 & 6000  & \texttt{splice} & 61 & 1000  \\

\texttt{madelon}  & 501 & 2000 &  \texttt{w1a} & 301 & 2477 \\

\texttt{mushrooms}  & 113 & 8124  & \texttt{w8a} & 301 & 49749  \\
\hline
\end{tabular}
\caption{Logistic Regression problems: $n$ denotes the number of variables (including the intercept), $m$ the number of samples.}
\label{tab:LRproblems}
\end{table}

For each dataset, we computed an approximate unconstrained minimizer $\bar w$ of \eqref{eq:nll} using the L-BFGS solver provided by the MATLAB routine \texttt{fminunc}. We then performed experiments with $\ell_1$-ball constraints of radius $R=\frac{1}{2}\|\bar w\|_1$, so that the unconstrained minimizer would be cut out of the feasible set. 
For each problem, ten runs with different initial points were performed, for a total of 100 instances. CPU times, iteration counts, and function evaluations were recorded.

The comparison between PGMM and SPG is shown in Figure \ref{fig:perf_L1} in the form of performance profiles \cite{dolan2004performance}. Detailed tables of results are available in the GitHub repository.
All instances were successfully solved by both algorithms, except for five instances of the \texttt{madelon} problem where SPG failed. PGMM achieved the best CPU time on all problems except \texttt{splice} and \texttt{w8a}; nevertheless, all instances were solved by PGMM in a time not exceeding $1.5$ times that of SPG. In terms of iterations, PGMM consistently outperformed SPG, while both methods exhibited comparable costs in terms of function evaluations. The above considerations suggest that projections and gradient evaluations are likely to account for a large part of the cost of the algorithms.

\begin{figure}[h!]
\centering

\begin{minipage}{0.45\textwidth}
\centering

\begin{subfigure}{\textwidth}
\centering
\caption{CPU Time}
\includegraphics[width=\textwidth]{./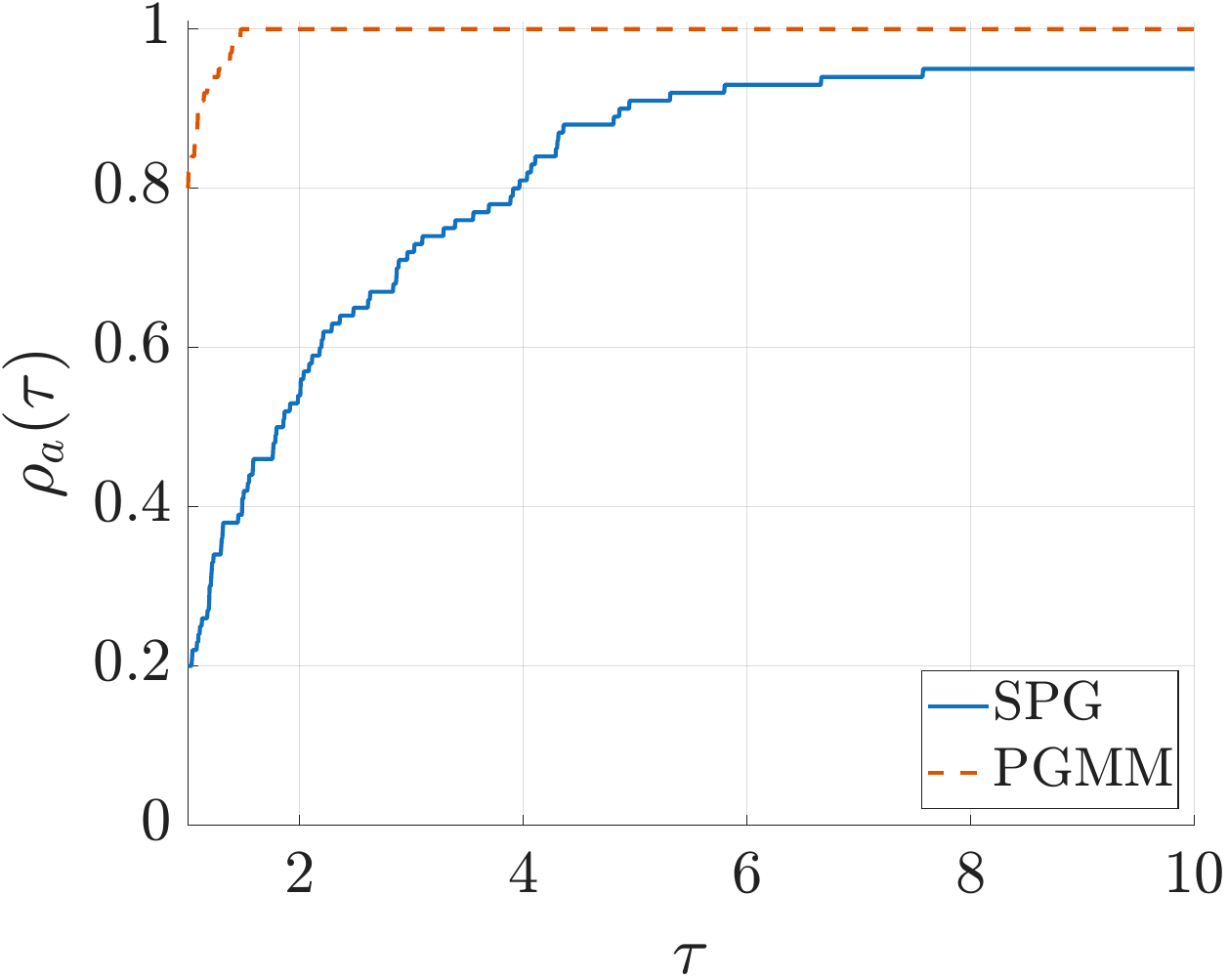}
\end{subfigure}

\vspace{0.3cm}

\begin{subfigure}{\textwidth}
\centering
\caption{Iterations}
\includegraphics[width=\textwidth]{./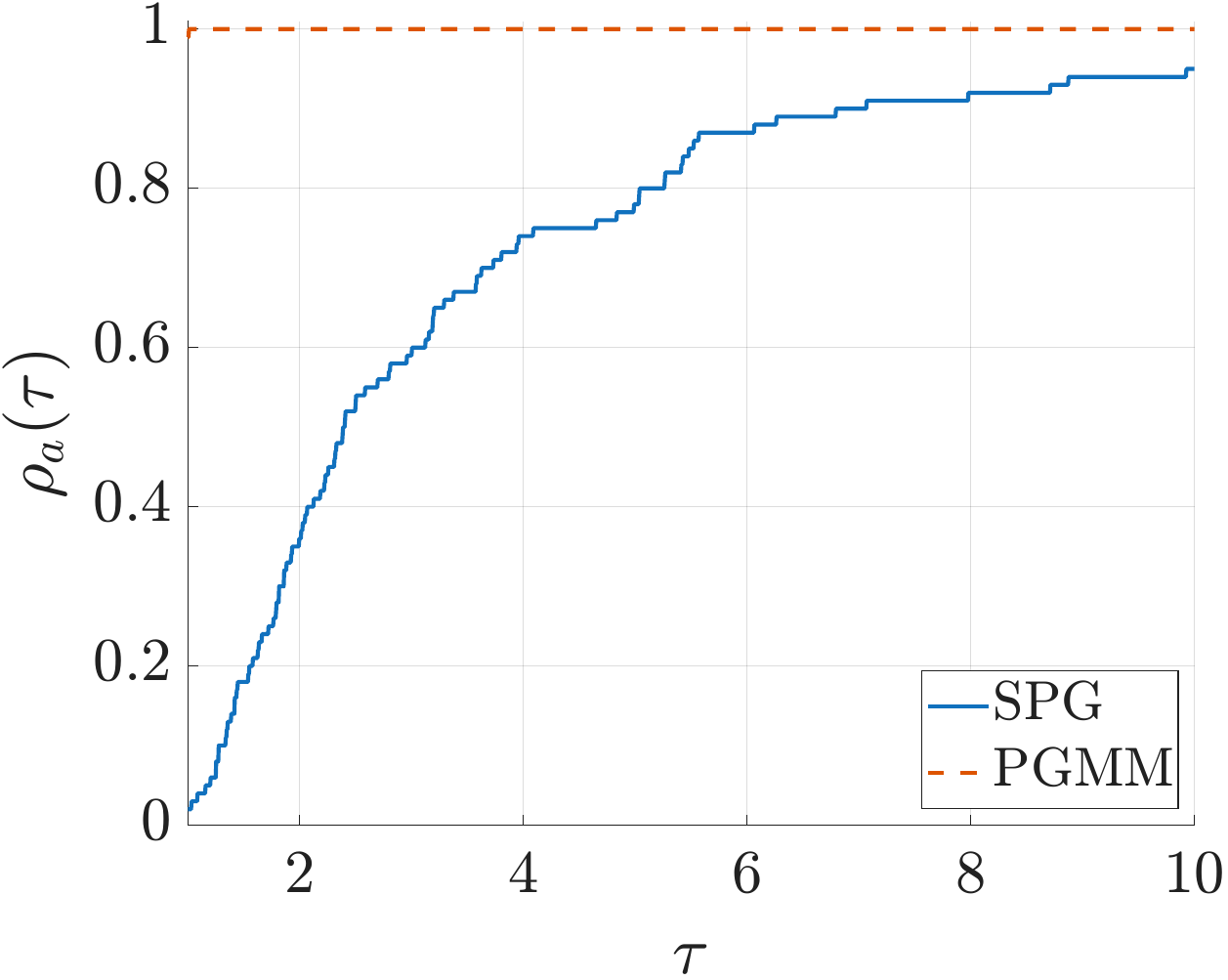}
\end{subfigure}

\vspace{0.3cm}

\begin{subfigure}{\textwidth}
\centering
\caption{Function evaluations}
\includegraphics[width=\textwidth]{./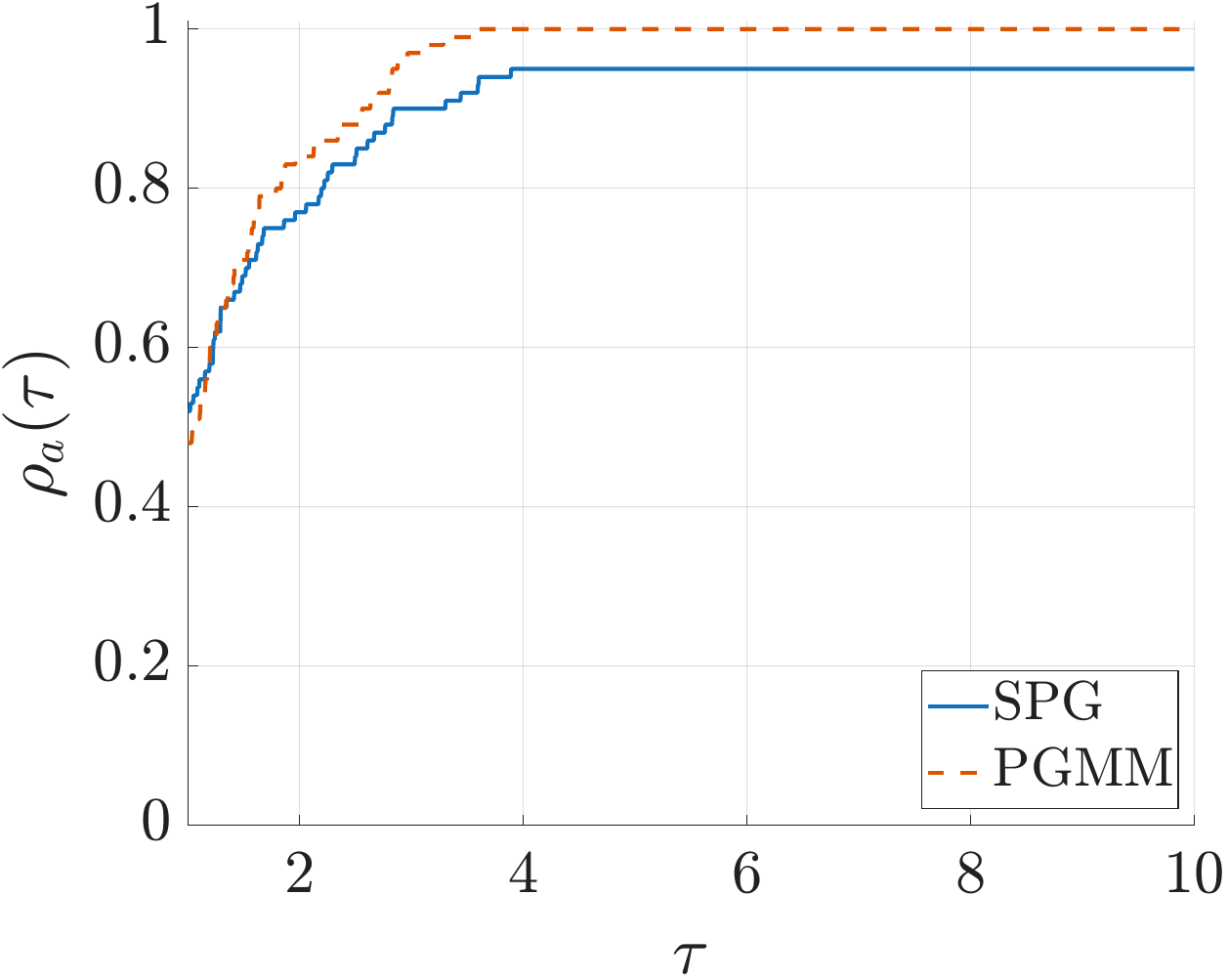}
\end{subfigure}

\caption{Performance profiles for the comparison between SPG and PGMM on the 100 LR instances with $\ell_1$-ball constraints.}
\label{fig:perf_L1}
\end{minipage}
\hfill
\hspace{0.05\textwidth}
\begin{minipage}{0.45\textwidth}
\centering

\begin{subfigure}{\textwidth}
\centering
\caption{CPU Time}
\includegraphics[width=\textwidth]{./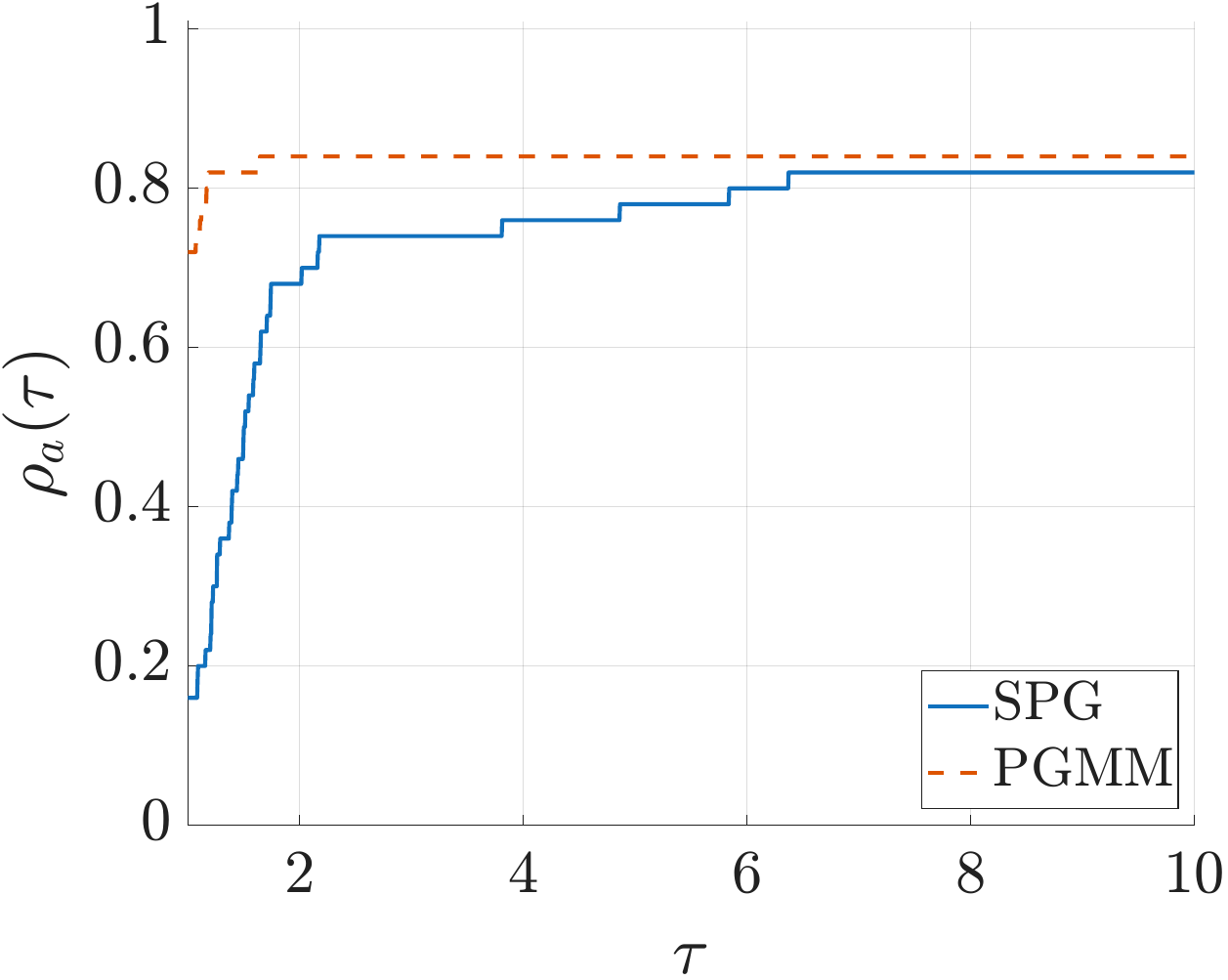}
\end{subfigure}

\vspace{0.3cm}

\begin{subfigure}{\textwidth}
\centering
\caption{Iterations}
\includegraphics[width=\textwidth]{./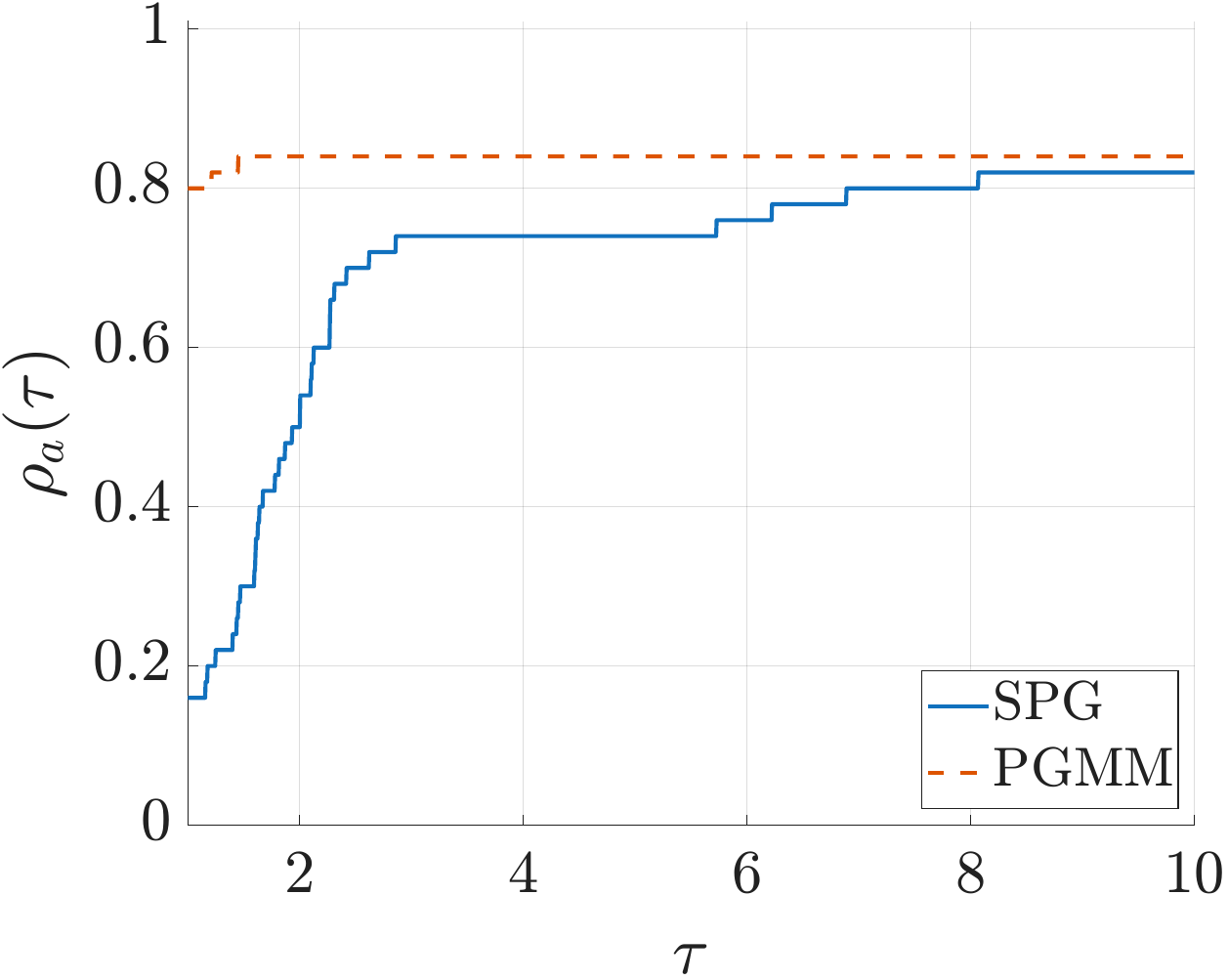}
\end{subfigure}

\vspace{0.3cm}

\begin{subfigure}{\textwidth}
\centering
\caption{Function evaluations}
\includegraphics[width=\textwidth]{./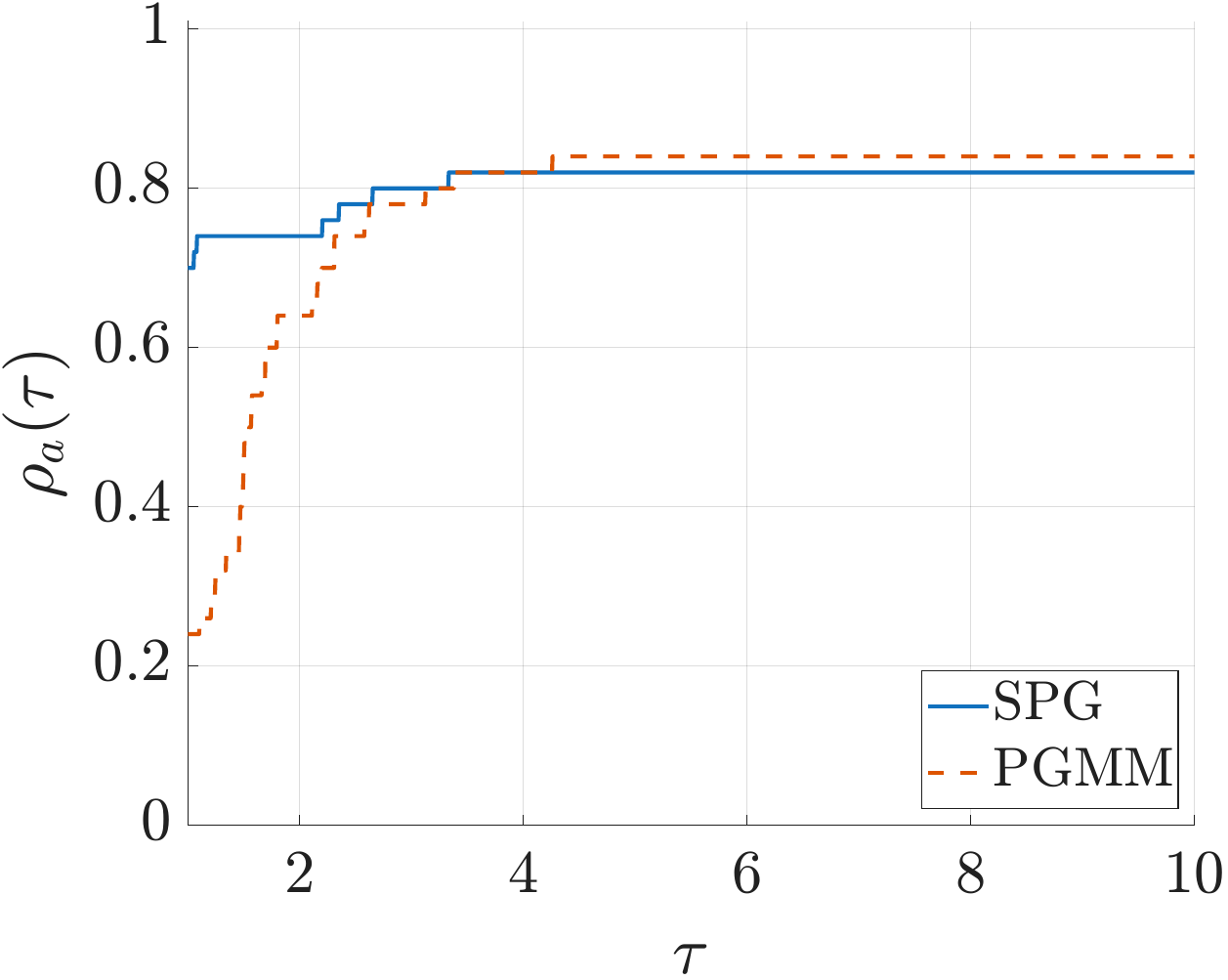}
\end{subfigure}

\caption{Performance profiles for the comparison between SPG and PGMM on the 50 CUTEst problems with bound constraints.}
\label{fig:perf_box}
\end{minipage}
\end{figure}

\subsection{Bound-constrained problems}
As a second benchmark, we considered box-constrained optimization problems, where the feasible set is $\mathcal S=\{x \in \mathbb R^n: l \le x \le u$\}, for given vectors $l$ and $u$, whose components may also take infinite values. The projection onto $\mathcal S$ is trivial; indeed, $\mathcal{P}_{\mathcal{S}}[x]=\min\big\{u,\max\{l,x\}\big\}.$

We considered the bound-constrained problems from the CUTEst collection \cite{gould2014cutest} (available at \url{https://github.com/ralna/CUTEst}), following the selection analyzed in \cite[Tables 2--5]{birgin2000nonmonotone}. A total of 50 problems were considered, using their default dimensions, bounds, and initial points.
For each problem, ten runs starting from the same initial point were performed to mitigate variability in execution times. Average CPU times, iteration counts, and function evaluations were collected.

The results are summarized in Figure \ref{fig:perf_box} using performance profiles. Tables containing the average results are available in the GitHub repository. 

SPG failed on 9 problems, while PGMM failed on 8. Both solvers failed on \texttt{BQPGAUSS}, \texttt{EXPQUAD}, \texttt{NCVXBQP2}, \texttt{NCVXBQP3}, \texttt{ODNAMUR}, and \texttt{QRTQUAD}. Additionally, SPG failed on \texttt{CHEBYQAD}, \texttt{QR3DLS}, and \texttt{SCON1LS}, whereas PGMM failed on \texttt{EXPLIN} and \texttt{EXPLIN2}. For all failures except \texttt{ODNAMUR}, termination was due to the numerical control on successive iterates; for \texttt{ODNAMUR}, the maximum number of iterations was reached.

PGMM achieved the best CPU time on 36 problems, while SPG was the fastest on 8. Moreover, all problems solved by PGMM were completed in a time not exceeding $1.7$ times that of SPG. PGMM also showed superior performance in terms of iterations, whereas SPG generally required fewer function evaluations. This behavior is expected since PGMM evaluates the objective function at three additional points per iteration to perform interpolation.

\smallskip
Overall, the numerical experiments on both problem classes highlight the superiority of PGMM over SPG. The proposed method can be interpreted as an enhancement of the classical SPG algorithm, where the introduction of a momentum term leads to a significant improvement in performance.

\section{Conclusions}
\label{sec:conc}
In this paper, we derived a rigorous convergence and complexity analysis for line search{-}based iterative descent algorithms for nonconvex optimization over {convex} sets. These novel results guided us towards the definition of convergent projected gradient{-}type methods exploiting momentum direction terms within their core mechanisms. From this general class of convergent methods, we drew a specific procedure that, with limited computational cost, minimizes at each iteration a two-dimensional quadratic model of the objective to construct the search direction, practically leading to a substantial speedup in convergence compared to the baseline gradient projection method. Experiments on a large benchmark of problems corroborated this claim. 

Future work might concern the analysis of the proposed methods in scenarios where exact projection is not computationally reasonable. Also, combinations of the projection-based approach with tailored strategies for particular constraint sets (e.g., active-set strategies for linear constraints \cite{andretta2010partial,birgin2002large,cristofari2022minimization}) could further improve the performance of the method. Finally, adaptations of the approach to work with classes of nonconvex constraints (see, e.g., \cite{jia2023augmented}) will also be of interest.


\section*{Declarations}

\subsection*{Acknowledgements}
The authors are thankful to the editors and anonymous referees of this manuscript for the constructive comments that helped to improve the quality of the work.

\subsection*{Funding}

No funding was received for conducting this study.

\subsection*{Competing interests}

The authors have no competing interests to declare that are relevant to the content of this article.

\subsection*{Data Availability Statement}
Data sharing is not applicable to this article as no new data were created or analyzed in this study.

\subsection*{Code Availability Statement}
The code developed for the experimental part of this paper is publicly available at \url{https://github.com/DiegoScuppa/PGMM}.

\bibliographystyle{spmpsci}

\begin{thebibliography}{10}
	\providecommand{\url}[1]{{#1}}
	\providecommand{\urlprefix}{URL }
	\expandafter\ifx\csname urlstyle\endcsname\relax
	\providecommand{\doi}[1]{DOI~\discretionary{}{}{}#1}\else
	\providecommand{\doi}{DOI~\discretionary{}{}{}\begingroup
		\urlstyle{rm}\Url}\fi
	
	\bibitem{andreani2005spectral}
	Andreani, R., Birgin, E.G., Mart{\'\i}nez, J.M., Yuan, J.: Spectral projected
	gradient and variable metric methods for optimization with linear
	inequalities.
	\newblock IMA Journal of Numerical Analysis \textbf{25}(2), 221--252 (2005)
	
	\bibitem{andretta2010partial}
	Andretta, M., Birgin, E.G., Mart{\'\i}nez, J.M.: Partial spectral projected
	gradient method with active-set strategy for linearly constrained
	optimization.
	\newblock Numerical Algorithms \textbf{53}(1), 23--52 (2010)
	
	\bibitem{barzilaiborwein1988twopoint}
	Barzilai, J., Borwein, J.M.: Two-point step size gradient methods.
	\newblock IMA Journal of Numerical Analysis \textbf{8}, 141--148 (1988).
	\newblock \doi{10.1093/imanum/8.1.141}
	
	\bibitem{bertsekas1999nonlinear}
	Bertsekas, D.P.: Nonlinear Programming.
	\newblock Athena Scientific (1999)
	
	\bibitem{birgin2002large}
	Birgin, E.G., Mario~Mart{\'\i}nez, J.: Large-scale active-set box-constrained
	optimization method with spectral projected gradients.
	\newblock Computational Optimization and Applications \textbf{23}(1), 101--125
	(2002)
	
	\bibitem{birgin2000nonmonotone}
	Birgin, E.G., Mart\'{\i}nez, J.M., Raydan, M.: Nonmonotone spectral projected
	gradient methods on convex sets.
	\newblock SIAM Journal on Optimization \textbf{10}(4), 1196--1211 (2000).
	\newblock \doi{10.1137/S1052623497330963}.
	\newblock \urlprefix\url{https://doi.org/10.1137/S1052623497330963}
	
	\bibitem{birgin2001algorithm}
	Birgin, E.G., Mart\'{\i}nez, J.M., Raydan, M.: Algorithm 813: Spg—software
	for convex-constrained optimization.
	\newblock ACM Trans. Math. Softw. \textbf{27}(3), 340–349 (2001).
	\newblock \doi{10.1145/502800.502803}.
	\newblock \urlprefix\url{https://doi.org/10.1145/502800.502803}
	
	\bibitem{birgin2003inexact}
	Birgin, E.G., Mart{\'\i}nez, J.M., Raydan, M.: Inexact spectral projected
	gradient methods on convex sets.
	\newblock IMA Journal of Numerical Analysis \textbf{23}(4), 539--559 (2003)
	
	\bibitem{bonettini2015new}
	Bonettini, S., Prato, M.: New convergence results for the scaled gradient
	projection method.
	\newblock Inverse Problems \textbf{31}(9), 095,008 (2015)
	
	\bibitem{bonettini2008scaled}
	Bonettini, S., Zanella, R., Zanni, L.: A scaled gradient projection method for
	constrained image deblurring.
	\newblock Inverse Problems \textbf{25}(1), 015,002 (2008)
	
	\bibitem{bottou2018optimization}
	Bottou, L., Curtis, F.E., Nocedal, J.: Optimization methods for large-scale
	machine learning.
	\newblock SIAM Review \textbf{60}(2), 223--311 (2018)
	
	\bibitem{cartis2022evaluation}
	Cartis, C., Gould, N.I., Toint, P.L.: Evaluation Complexity of Algorithms for
	Nonconvex Optimization: Theory, Computation and Perspectives.
	\newblock SIAM (2022)
	
	\bibitem{condat2016fast}
	Condat, L.: Fast projection onto the simplex and the l1 ball.
	\newblock Mathematical Programming \textbf{158}(1), 575--585 (2016)
	
	\bibitem{cristofari2022minimization}
	Cristofari, A., De~Santis, M., Lucidi, S., Rinaldi, F.: Minimization over the
	$\ell_1$-ball using an active-set non-monotone projected gradient.
	\newblock Computational Optimization and Applications \textbf{83}(2), 693--721
	(2022)
	
	\bibitem{dolan2004performance}
	Dolan, E.D., Mor{\'e}, J.J.: Benchmarking optimization software with
	performance profiles.
	\newblock Mathematical Programming \textbf{91}(2), 201--213 (2002)
	
	\bibitem{ferreira2022inexact}
	Ferreira, O.P., Lemes, M., Prudente, L.F.: On the inexact scaled gradient
	projection method.
	\newblock Computational Optimization and Applications \textbf{81}(1), 91--125
	(2022)
	
	\bibitem{golbabaee2018inexact}
	Golbabaee, M., Davies, M.E.: Inexact gradient projection and fast data driven
	compressed sensing.
	\newblock IEEE Transactions on Information Theory \textbf{64}(10), 6707--6721
	(2018)
	
	\bibitem{gonccalves2022inexact}
	Gon{\c{c}}alves, D.S., Gon{\c{c}}alves, M.L., Menezes, T.C.: Inexact variable
	metric method for convex-constrained optimization problems.
	\newblock Optimization \textbf{71}(1), 145--163 (2022)
	
	\bibitem{gould2014cutest}
	Gould, N., Orban, D., Toint, P.: {CUTE}st: a {C}onstrained and {U}nconstrained
	{T}esting {E}nvironment with safe threads for mathematical optimization.
	\newblock Computational Optimization and Applications \textbf{60} (2014).
	\newblock \doi{10.1007/s10589-014-9687-3}
	
	\bibitem{grippo1986nonmonotone}
	Grippo, L., Lampariello, F., Lucidi, S.: A nonmonotone line search technique
	for newton’s method.
	\newblock SIAM Journal on Numerical Analysis \textbf{23}(4), 707--716 (1986).
	\newblock \doi{10.1137/0723046}.
	\newblock \urlprefix\url{https://doi.org/10.1137/0723046}
	
	\bibitem{gripposciandrone2023introduction}
	Grippo, L., Sciandrone, M.: Introduction to methods for nonlinear optimization.
	\newblock Unitext. Springer, Cham (2023)
	
	\bibitem{hastie2009elements}
	Hastie, T., Tibshirani, R., Friedman, J.: The Elements of Statistical Learning:
	Data Mining, Inference and Prediction, 2nd edn. (2009)
	
	\bibitem{jia2023augmented}
	Jia, X., Kanzow, C., Mehlitz, P., Wachsmuth, G.: An augmented {L}agrangian
	method for optimization problems with structured geometric constraints.
	\newblock Mathematical Programming \textbf{199}(1), 1365--1415 (2023)
	
	\bibitem{lapucci2025qps}
	Lapucci, M., Liuzzi, G., Lucidi, S., Pucci, D., Sciandrone, M.: A globally
	convergent gradient method with momentum.
	\newblock Computational Optimization and Applications pp. 1--26 (2025)
	
	\bibitem{lee2022limited}
	Lee, C.p., Wang, P.W., Lin, C.J.: Limited-memory common-directions method for
	large-scale optimization: convergence, parallelization, and distributed
	optimization.
	\newblock Mathematical Programming Computation \textbf{14}(3), 543--591 (2022)
	
	\bibitem{lee2006efficient}
	Lee, S.I., Lee, H., Abbeel, P., Ng, A.Y.: Efficient {L}1 regularized logistic
	regression.
	\newblock In: AAAI, vol.~6, pp. 401--408 (2006)
	
	\bibitem{levitin1966constrained}
	Levitin, E.S., Polyak, B.T.: Constrained minimization methods.
	\newblock USSR Computational Mathematics and Mathematical Physics
	\textbf{6}(5), 1--50 (1966)
	
	\bibitem{liu2009large}
	Liu, J., Chen, J., Ye, J.: Large-scale sparse logistic regression.
	\newblock In: Proceedings of the 15th ACM SIGKDD international conference on
	Knowledge discovery and data mining, pp. 547--556 (2009)
	
	\bibitem{polyak1964heavyball}
	Polyak, B.: Some methods of speeding up the convergence of iteration methods.
	\newblock USSR Computational Mathematics and Mathematical Physics
	\textbf{4}(5), 1--17 (1964).
	\newblock \doi{https://doi.org/10.1016/0041-5553(64)90137-5}.
	\newblock
	\urlprefix\url{https://www.sciencedirect.com/science/article/pii/0041555364901375}
	
	\bibitem{raydan1997barzilaiborwein}
	Raydan, M.: The {B}arzilai and {B}orwein gradient method for the large scale
	unconstrained minimization problem.
	\newblock SIAM Journal on Optimization \textbf{7}(1), 26--33 (1997).
	\newblock \doi{10.1137/S1052623494266365}.
	\newblock \urlprefix\url{https://doi.org/10.1137/S1052623494266365}
	
	\bibitem{tang2024manifold}
	Tang, T., Toh, K.C., Xiao, N., Ye, Y.: A {R}iemannian dimension-reduced
	second-order method with application in sensor network localization.
	\newblock SIAM Journal on Scientific Computing \textbf{46}(3), A2025--A2046
	(2024).
	\newblock \doi{10.1137/23M1567229}.
	\newblock \urlprefix\url{https://doi.org/10.1137/23M1567229}
	
	\bibitem{tao2020momentum}
	Tao, W., Wu, G.W., Tao, Q.: Momentum acceleration in the individual convergence
	of nonsmooth convex optimization with constraints.
	\newblock IEEE Transactions on Neural Networks and Learning Systems
	\textbf{33}(3), 1107--1118 (2022)
	
	\bibitem{wright2022optimization}
	Wright, S.J., Recht, B.: Optimization for Data Analysis.
	\newblock Cambridge University Press (2022)
	
\end{thebibliography}

\appendix  
\section*{Appendix:  Closed form solution for the problem of minimizing a {two-}dimensional quadratic function over a polytope}

We consider the two-dimensional optimization problem
\begin{equation}
\label{eqn:phi}
\begin{aligned}
\min_{\alpha,\beta}\;&\varphi(\alpha,\beta)=\frac{1}{2}\begin{bmatrix}
			\alpha\\ 
			\beta
		\end{bmatrix}^T
        \begin{bmatrix}
			t & u\\ 
			u & w
		\end{bmatrix}
        \begin{bmatrix}
			\alpha\\ 
			\beta
		\end{bmatrix}
        +
        \begin{bmatrix}
			y\\ 
			h
		\end{bmatrix}^T
        \begin{bmatrix}
			\alpha\\ 
			\beta
		\end{bmatrix}\quad 
        \text{ s.t. }\quad\alpha\ge 0,\;\beta\ge0,\quad \alpha+\beta\le 1.
\end{aligned}
\end{equation}
The feasible set of the problem is
$
S=\{(\alpha,\beta)\in \mathbb{R}^2 \mid \alpha\ge0, \beta\ge0, \alpha+\beta\le1\},
$
which is depicted in Figure \ref{fig:feas_set}.
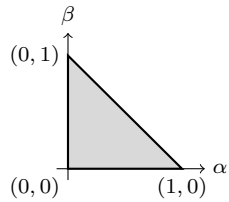
\begin{figure}[htbp]
\centering
    \begin{tikzpicture}[scale=1.5]
        \draw[->] (-0.1,0) -- (1.2,0) node[right] {\(\alpha\)};
        \draw[->] (0,-0.1) -- (0,1.2) node[above] {\(\beta\)};
        
        \fill[black!30, opacity=0.5] (0,0) -- (1,0) -- (0,1) -- cycle;
        
        \draw[thick] (0,0) -- (1,0) -- (0,1) -- cycle;
        
        \node[below left] at (0,0) {\( (0,0) \)};
        \node[below] at (1,0) {\( (1,0) \)};
        \node[left] at (0,1) {\( (0,1) \)};
    \end{tikzpicture}
    \caption{Feasible set of the {two-}dimensional problem in $\alpha$ and $\beta$.}
    \label{fig:feas_set}
\end{figure}

As $S$ is compact, there exists at least one optimal solution to problem \eqref{eqn:phi}. We can split the set $S$ into the interior part and the boundary:
$S=\mathring{S} \, \dot\cup\, \partial S$. In particular, the boundary could be further split into
$
\partial S=A_1 \cup A_2 \cup A_3 \cup \{(0,0)\} \cup \{(0,1)\} \cup \{(1,0)\}, 
$
where
$
A_1=\{(\alpha,0) \mid 0<\alpha<1\}, A_2=\{(0,\beta) \mid 0<\beta<1\}, A_3=\{(\alpha,1-\alpha) \mid 0<\alpha<1\}.
$
Hence, we can first search for a minimizer of $\phi$ in $\mathring{S}$ and then a (candidate) minimizer of $\phi$ in each subset of $\partial S$. First of all, we note that if the $2\times 2$ quadratic matrix 
is not positive definite, then a global minimizer of $\phi$ on $S$ can necessarily be found in  $\partial S$. In this case, we directly skip to the analysis of $\partial S$. Otherwise, we compute the (unique) unconstrained minimizer of $\phi$: if it belongs to $S$, then it is clearly the unique minimizer of $\phi$ on $S$. Otherwise, we analyze the behavior of $\phi$ in $\partial S$: we compute the value of $\phi$ in the three vertices of the simplex and the three minimizers of the (mono-dimensional) functions along the three edges. A sketch of an algorithm is presented as Algorithm \ref{alg:2dim}.

\begin{algorithm}[htbp]
	\caption{\texttt{ Min of a 2-dim quadratic function over a polytope}}
	\label{alg:2dim}
	\algnewcommand{\LineComment}[1]{\State \texttt{\( \slash\ast \) #1 \( \ast\slash \)}}
	\begin{algorithmic}[1]
		\Require $t,u,w,y,h \in \mathbb R$.
        \LineComment{Min on $\mathring{S}$}
        \State  $\det = tw-u^2$
        \If{$t>0$ \textbf{ and } $\det > 0$}
        \LineComment{The matrix is positive definite}
        \State $(\alpha^*,\beta^*)=(-wy+uh, uy-th)/\det$
        \If{ $(\alpha^*,\beta^*)\in S$}
        \State \Return $(\alpha^*,\beta^*)$
       \LineComment{Stop as it is the min on $S$. Otherwise: investigate $\partial S$}
        \EndIf
        \EndIf
   \LineComment{Min on vertices}
   \State  $f^*= 0$
   \State $(\alpha^*, \beta^*)=(0,0)$
   \If{$\frac{t}{2}+y<f^*$}
   \State $f^*= \frac{t}{2}+y$
    \State $(\alpha^*, \beta^*)=(1,0)$
   \EndIf
   \If{$\frac{w}{2}+h<f^*$}
   \State $f^*= \frac{w}{2}+h$ 
    \State $(\alpha^*, \beta^*)=(0,1)$
   \EndIf
    \LineComment{Min on $A_1$: $\min_{0<\alpha<1} \varphi(\alpha,0)=\frac{1}{2}t\alpha^2+y\alpha$}
    \If{$t>0$ \textbf{and} $0<-\frac{y}{t}<1$ \textbf{and} $-\frac{y^2
        }{2t}<f^*$}
    \State $f^*=-\frac{y^2
        }{2t}$
         \State $(\alpha^*,\beta^*)=(-\frac{y}{t},0)$
    \EndIf
    \LineComment{Min on $A_2$: $\min_{0<\beta<1} \varphi(0,\beta)=\frac{1}{2}w\beta^2+h\beta$}
    \If{$w>0$ \textbf{and}  $0<-\frac{h}{w}<1$ \textbf{and} $-\frac{h^2}{2w}<f^*$}
    \State $f^*= -\frac{h^2}{2w}$
    \State $(\alpha^*, \beta^*)=(0,-\frac{h}{w})$
    \EndIf
    \LineComment{Min on $A_3$: 
    $\min_{0<\alpha<1} \varphi(\alpha,1\!-\!\alpha)=$$\frac{1}{2}(t\!-\!2u\!+\!w)\alpha^2+(u\!-\!w\!+\!y\!-\!h)\alpha+(\frac{w}{2}\!+\!h)$}
    \State $\bar{\alpha}=-\frac{u-w+y-h}{t-2u+w}$
    \If{$t-2u+w> 0$ \textbf{and} $0<\bar{\alpha}<1$ \textbf{and}        $\varphi(\bar{\alpha},1-\bar{\alpha})<f^*$}
    \State $f^*=\varphi(\bar{\alpha},1-\bar{\alpha})$
    \State $ (\alpha^*, \beta^*)=(\bar{\alpha},1-\bar{\alpha})$
    \EndIf
    \State \Return{$(\alpha^*, \beta^*)$}
		
	\end{algorithmic}
\end{algorithm}


\end{document}